\newenvironment{proof}{\textbf{Proof}:}{\hfill$\square$}
\newtheorem{definition}{Definition}
\newtheorem{proposition}{Proposition}
\newtheorem{corollary}{Corollary}
\newtheorem{lemma}{Lemma}
\newtheorem{theorem}{Theorem}
\newcommand{\dn}{\mathbf{d}}
\newcommand{\R}{\mathbb{R}}
\begin{document}

\begin{frontmatter}

\title{Robust Finite-time Stabilization of Linear Systems with Limited State Quantization} 

\thanks[footnoteinfo]{This paper was not presented at any IFAC 
meeting. }

\author[Inria]{Yu Zhou}\ead{yu.zhou@inria.fr},    
\author[Inria]{Andrey Polyakov}\ead{andrey.polyakov@inria.fr},               
\author[Inria]{Gang Zheng}\ead{gang.zheng@inria.fr}  

\address[Inria]{Inria, Univ. Lille, CNRS, Centale Lille, Lille, France}  

\begin{keyword}                           
static quantizer; limited data; homogeneous control; linear plant.               
\end{keyword}                            
\begin{abstract}      
 This paper investigates the robust asymptotic stabilization of a linear time-invariant (LTI) system by a static feedback with a static state quantization. It is shown that the controllable LTI system 
 can be stabilized to zero in a finite time by means of a nonlinear feedback with a quantizer having a limited (finite) number of values (quantization seeds) even when  all parameters of the controller and the quantizer are time-invariant. The control design is based on generalized homogeneity. A homogeneous spherical quantizer is introduced. The static homogeneous feedback is shown to be local (or global) finite-time stabilizer for the linear system (dependently of the system matrix).  The tuning rules for both the quantizer and the feedback law are obtained in the form of Linear Matrix Inequalities (LMIs). The closed-loop system is proven to be robust with respect to some bounded matched and vanishing mismatched perturbations. Theoretical results are supported by numerical simulations.
\end{abstract}

\end{frontmatter}

\section{Introduction}
Quantization is an important process of networked control system design aimed at the reduction of data transmission \cite{jiang2013survry}, \cite{wang2017encoding}. Quantization involves partitioning the state space into discrete and disjoint subsets known as quantization cells. Within each quantization cell, all states are represented by just one quantization seed (an element of the cell). In network control, a number of the quantization seed is transmitted and stored instead of the exact value of the state vector belonging to the corresponding quantization cell.
Quantizers (quantization operators) can be classified into static (as in \cite{elia_etal_2001_TAC}) and dynamic (see \cite{brockett_Liberzon2000TAC}).
A static quantizer has time-invariant quantization cells and seeds, whereas the quantization cells and seeds of a dynamic quantizer can be adjusted as time evolves. 

The logarithmic quantizer proposed in \cite{elia_etal_2001_TAC} is an example of the static quantizer. The control design and analysis of linear control systems with logarithmic quantizer is based on the quadratic Lyapunov functions. The logarithmic quantizer follows the intuitive idea that the farther state is from the origin, the less precise control action and knowledge about the state are required. Such a quantizer has an infinite (but countable) number of quantization levels/seeds.
A commonly employed dynamic quantizer  \cite{brockett_Liberzon2000TAC} adjusts the quantization precision depending on the state or time. A finite static quantizer refers to a time-invariant quantizer having a finite number of quantization seeds on the whole state space.

A global asymptotic stabilizer designed for a ``non-quantized'' system generally fails to provide asymptotic stability in the case of a finite static quantization \cite{bullo_Liberzon2006TAC}.
Existing studies have addressed asymptotic stabilization only through logarithmic or dynamic  quantizers 
(see also e.g., \cite{Fu_etal_2005_TAC}, \cite{gu2014SCL}, \cite{wang2021TAC}). The former requires infinite number of quantization levels, while the latter needs a transmission of time-varying quantizer parameters for encoding/decoding over the network (e.g., \cite{fu2009finite}, \cite{liu2015dynamic}, \cite{zheng2018quantized}, \cite{wang2022TAC}). 
 A linear feedback with finite static state quantizer can only ensure just a practical stability of LTI system (see, e.g., \cite{delchamps1990TAC}, \cite{bullo_Liberzon2006TAC}, \cite{corradini2008Aut}, \cite{ferranteTAC2015}, \cite{wang2018Aut}). This limitation is due to the fact that a linear feedback cannot reject a nonvanishing quantization errors. 
 Motivated by the above observations, this paper is aimed at  a nonlinear feedback control design for asymptotic (in fact,  finite-time) stabilization of LTI system with a finite static quantization of the  states. The so-called generalized homogeneity is utilized for this purpose.
 
Homogeneity is a symmetry with respect to a group of transformations having topological characterization of a dilation \cite{kawski1991}. Homogeneous systems are widely studied in control systems theory (e.g., \cite{zubov1958systems}, \cite{rosier1992SCL}, \cite{grune2000SIAM},  \cite{bhat_etal_2005_MCSS}, \cite{andrieu2008homogeneous}) due to its many important properties, including the existence of a homogeneous Lyapunov function,  finite/fixed-time convergence dependently on a homogeneity degree, equivalence between local and global stability. 
There are few results on homogeneous controllers with state quantization for sliding mode \cite{yan2019Aut}, \cite{li2017IJRNC} and linear systems \cite{zhou2023arxiv}. However, all previous studies have focused on logarithmic quantizers or dynamic quantizers.
The homogeneity allows a system analysis and control design  to be reduced to a compact (e.g., a sphere) \cite{bhat_etal_2005_MCSS}. Since, for compactly supported states, 
a finite quantization can provide any desired precision, then a compactly designed regulator may become a global stabilizer of the system due to homogeneity.

In our previous work on homogeneous stabilization using a logarithmic quantizer \cite{zhou2023CDC}, \cite{zhou2023arxiv}, we demonstrate the feasibility of finite/fixed-time stabilization of an LTI plant with  states quantization, which, however, requires an infinite number of quantization levels/seeds. Under the limited quantization, the previous approach could only achieve a practical stabilization. 

\vspace{-1mm}
In this paper, a special finite static homogeneous spherical quantizer is developed and combined with homogeneous feedback \cite{polyakov2019IJRNC} to design a finite-time stabilizer for LTI systems. Unlike existing quantizers (uniform, logarithmic, or dynamic) which have bounded quantization cells, each quantization cell of the proposed homogeneous spherical quantizer is a homogeneous cone (unbounded set), but all quantization seeds lie on a unit sphere. This provides a novel solution for stabilization with limited data. We also demonstrate that the proposed quantized stabilizer rejects some classes of additive non-vanishing perturbations.

\vspace{-1mm}
The paper is organized as follows. In Section \ref{sec:problem}, we formulate the problem. Section \ref{sec:pre} provides a brief introduction to homogeneous control systems analysis and design. In Section \ref{sec:main}, we present the main results, which include the design of a homogeneous spherical quantizer as well as  homogeneous quantized feedback. Finally, Section \ref{sec:sim} presents the simulation results.

\subsection*{Notations}\label{section:2}
$\mathbb{R}$: real numbers; $\mathbb{R}_{+}$: non-negative real numbers;
$\mathbb{N}$: natural numbers excluding zero;
$\mathbf{0}$: zero element of a space;  
$P \succ 0 (\prec 0, \succeq 0, \preceq 0)$: positive (negative) definite (or semidefinite) symmetric matrix $P$;
$\lambda_{\min }(P)$, $\lambda_{\max }(P)$: smallest and largest eigenvalues of  matrix $P$;
 $P^{\frac{1}{2}}$: square root matrix (i.e. $(P^{\frac{1}{2}})^{2} = P$);
$|x|=\sqrt{x^{\top}x}$: the conventional Euclidean norm for $x \in \mathbb{R}^n$;
$\|x\| = \sqrt{x^{\top}Px}$: the weighted Euclidean norm of $x \in \mathbb{R}^{n}$ (with $P \succ 0$ to be defined);
$I_n$: the identity $n\times n$ matrix;
$S^{n-1}(1)$: unit sphere $S^{n-1}(1) = \{x\in\mathbb{R}^n:\|x\|=1\}$;
$B(r,x_0)$: open ball with radius $r$ at center $x_0$; $B(r,x_0) = \{x\in\mathbb{R}^n: \|x-x_0\|<r\}$;
$|Q|$: cardinality of a  set $Q$; $\lfloor x\rfloor=\max \{m \in \mathbb{Z} \mid m \leq x\}$: the floor operator for $x\in \R$.

\section{Problem formulation}\label{sec:problem}
In this paper, we consider the system: \vspace{-1mm}
\begin{equation}\label{eq:lin_sys}
	\dot{x} = Ax + Bu + g(t,x),\quad t>0, \quad x(0)=x_0, \vspace{-1mm}
\end{equation}
where $x(t) \in \mathbb{R}^{n}$ is the system state, $u(t)\in \mathbb{R}^{m}$ is the control signal, $A \in \mathbb{R}^{n \times n}$ and $B \in \mathbb{R}^{n \times m}$ are known system matrices, $g:\mathbb{R}_+ \times \mathbb{R}^n \mapsto \mathbb{R}^n$ is an unknown piece-wise continuous function satisfying certain restrictions (see below). 
The pair $(A, B)$ is  controllable.

{   Let $\mathcal{E}$ be a subset of $\mathbb{R}^n$  and $\Xi$ be a subset of $\mathbb{N}$. Let $\mathcal{D}_i\subseteq\mathcal{E}$ with $i\in\Xi$ be non-empty disjoint subsets covering $\mathcal{E}$: $\mathcal{D}_i \cap \mathcal{D}_j = \emptyset$ if $i\neq j$ and $\underset{i\in\Xi}{\cup} \mathcal{D}_i=\mathcal{E}$. Let $\mathcal{Q}=\{q_i\}_{i\in \Xi}$ such that $q_i\in \mathcal{D}_i$ for any $i\in \Theta$. {A function $\mathfrak{q}:\mathcal{E}\mapsto \mathcal{Q}$ is said to be a quantization function (\textit{quantizer}) on the set $\mathcal{E}$ if $\mathfrak{q}(x)=q_i$ for all $x\in \mathcal{D}_i$ and all $i\in \Xi$. }

The set $\mathcal{Q}$ represents a collection of all quantization values.
Each disjoint set $\mathcal{D}_i$ is called by a quantization cell, and the vector $\mathfrak{q}_i$ is the quantization seed of $\mathcal{D}_i$. 
In this paper, we assume that the whole state vector is available but it is quantized such that the set $\mathcal{Q}$ is finite and time-invariant. This case corresponds to the so-called \textit{finite static quantizer}.
{We refer the reader to  \cite{brockett_Liberzon2000TAC}, \cite{bullo_Liberzon2006TAC} for more details about state quantization.}

The feedback control  with quantization is given by: \vspace{-1mm}
\begin{equation}\label{eq:f_q}
        u = \tilde{u}(\mathfrak{q}(x)), \vspace{-1mm}
    \end{equation}
    where $\tilde{u}:\mathbb{R}^n\mapsto \mathbb{R}^m$ is a static feedback law, $\mathfrak{q}(x)$ is a quantized state.
    The closed-loop system has discontinuous
right-hand side. Its solutions are understood
in the sense of Filippov (see \cite{filippov2013differential}).

 The aim is to address the problem of finite-time stabilization  by a static feedback with \textit{finite static quantizer}. 
In other words, we need to design a quantizer  $\mathfrak{q}:\mathcal{E}=\mathbb{R}^n\mapsto \mathcal{Q}\subset\mathbb{R}^n, |\mathcal{Q}|<+\infty$ and a feedback law $\tilde u:\R^n\mapsto \R^n$ such that $\mathfrak{q}$ is time-invariant and the closed-loop system \eqref{eq:lin_sys}, \eqref{eq:f_q} is globally or locally uniformly finite-time stable
\footnote{The system \eqref{eq:lin_sys} is said to be globally (locally) uniformly \vspace{-2mm}
\begin{itemize}
\item 
\textit{Lyapunov stable} if $\exists \alpha\in \mathcal{K}$ such that $\|x(t)\|\leq \alpha(\|x_0\|)$,  $\forall t\geq 0$, 
$\forall x_0\in \R^n$ (resp., $\in \Omega$ - a neighbourhod of $\mathbf{0}$);\vspace{-2mm}
\item \textit{finite-time stable} (see, e.g., \cite{Roxin1966:RMCP}, \cite{bhat2000finite}, \cite{Orlov2004:SIAM}) if it is globally (resp., locally) uniformly Lyapunov stable and there exists a locally bounded function $T: \R^n \mapsto \R_+$ such that $\|x(t)\|=0, \forall t\geq T(x_0), \forall x_0\in \R^n$ (resp., $\in \Omega$).\vspace{-2mm}
\end{itemize}
}.
More specifically, we consider the problem of designing a finite static quantizer for a given finite-time stabilizer $\tilde{u}$. We are also interested in finding the size of the data, which has to be transmitted through the network, in order to use the feedback $\tilde{u}(\mathfrak{q}(x))$. This size is deremined by the number of bits $M$ such that 
$|\mathcal{Q}| \leq \log_2{M}$.
We show that the considered problems can be solved for the so-called generalized homogeneous stabilizers (see, e.g., \cite{polyakov2016IJRNC}, \cite{polyakov2019IJRNC}).

\section{Preliminary}
\label{sec:pre}

\subsection{Homogeneity and homogeneous function, system}
Homogeneity refers to a class of dilation symmetries, which have been shown to possess several useful properties for control design and analysis \cite{zubov1958systems}, \cite{khomenyuk1961systems}, \cite{kawski1991}, \cite{rosier1992SCL},  \cite{bhat_etal_2005_MCSS}. 
\begin{definition}[\cite{kawski1991}]
	\textit{A mapping $\dn(s):\mathbb{R}^n\mapsto \mathbb{R}^n$, $s\in\mathbb{R}$ is said to be a  dilation in $\mathbb{R}^n$ if }
	\begin{itemize}
		\item \textit{$\dn(0) = I_n$, $\dn(s+t) = \dn(s)\dn(t)=\dn(t)\dn(s)$, $\forall s, t\in\mathbb{R}$;}
		\item  \textit{$\lim\limits_{s\rightarrow -\infty}\|\dn(s)x\| =0$ and $\lim\limits_{s\rightarrow \infty}\|\dn(s)x\| =\infty$, $\forall x\neq \mathbf{0}$.}  
	\end{itemize}
\end{definition}
The dilation $\dn$ is continuous (linear) if the mapping $s\mapsto \dn(s)$ is continuous (resp., linear). 
Any \textit{linear} continuous dilation in $\R^n$ is given by \vspace{-2mm}
\begin{equation}\label{eq:dilation}
\dn(s) = e^{sG_\dn}:=\textstyle\sum_{i=0}^{\infty} \tfrac{s^i G_\dn^i}{i!}, \vspace{-2mm}
\end{equation}
where an anti-Hurwitz matrix $G_\dn$ is  known as the \textit{generator} of the dilation. For  $G_\dn = I_n$ we have the standard dilation
$e^sI_n$, while a diagonal $G_\dn$ corresponds to the weighted dilation (see  \cite{zubov1958systems}, \cite{rosier1992SCL}).

\begin{definition}[\cite{polyakov2019IJRNC}]\label{def:monocity}
A dilation $\dn$ is strictly monotone with respect to a norm $\|\cdot\|$ in $\R^n$ 
if $\exists \beta\!>\!0$ :
	$
	\|\mathbf{d}(s)\| \!\leq\! e^{\beta s}, \forall s \!\leq\! 0 
	$.
\end{definition}
We deal only with the linear continuous dilation given by \eqref{eq:dilation}. The subsequent condition is a direct result of the quadratic Lyapunov function theorem for linear systems.
\begin{proposition}
\label{prop: 1}
	\textit{A linear continuous dilation $\mathbf{d}$ in $\R^n$ is strictly monotone  with respect to the norm $\|z\|=\sqrt{z^\top P z}$ if and only if the following linear matrix inequality holds \vspace{-2mm}
	\begin{equation} P\succ 0, \
	P G_\dn+G_\dn^{\top} P \succ 0,\label{eq:mon_cond} \vspace{-2mm}
	\end{equation}
	where $G_\dn \in \mathbb{R}^{n\times n}$ is the generator of $\mathbf{d}$. }
\end{proposition}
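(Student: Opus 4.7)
The plan is to reduce the monotonicity condition to a differential inequality for the quadratic Lyapunov function $V(z)=z^\top P z=\|z\|^2$ evaluated along the one-parameter flow $z(s)=\mathbf{d}(s)z_0$, and then to read off the LMI as an infinitesimal version of that inequality.

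First, I would note that positive definiteness $P\succ 0$ is forced by the very requirement that $\|z\|=\sqrt{z^\top P z}$ defines a norm, so it can be carried along as a standing assumption and the only real content is the second inequality in \eqref{eq:mon_cond}. Next, using the generator representation \eqref{eq:dilation}, I would compute
\begin{equation}
\tfrac{d}{ds}V(\mathbf{d}(s)z_0)=z_0^\top \mathbf{d}(s)^\top(PG_\dn+G_\dn^\top P)\mathbf{d}(s)z_0,
\end{equation}
since $\tfrac{d}{ds}\mathbf{d}(s)=G_\dn\mathbf{d}(s)=\mathbf{d}(s)G_\dn$.

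For sufficiency, assume $PG_\dn+G_\dn^\top P\succ 0$. Compactness of the sphere $S^{n-1}(1)$ together with $P\succ 0$ yields a constant $\alpha>0$ with $PG_\dn+G_\dn^\top P\succeq \alpha P$. Consequently $\tfrac{d}{ds}V(\mathbf{d}(s)z_0)\geq \alpha V(\mathbf{d}(s)z_0)$, and Gronwall's inequality (applied backward in $s$) gives $V(\mathbf{d}(s)z_0)\leq e^{\alpha s}V(z_0)$ for all $s\leq 0$, i.e. $\|\mathbf{d}(s)z_0\|\leq e^{\beta s}\|z_0\|$ with $\beta=\alpha/2>0$. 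Taking the supremum over $z_0$ on the unit sphere yields the operator norm bound required in Definition~\ref{def:monocity}.

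For necessity, I would fix $z_0$ with $\|z_0\|=1$ and consider $h(s)=e^{2\beta s}V(z_0)-V(\mathbf{d}(s)z_0)$. By hypothesis $h(s)\geq 0$ for $s\leq 0$ and $h(0)=0$, so $h$ attains a local minimum at $s=0$ from the left, which forces $h'(0)\leq 0$. Evaluating this derivative gives $z_0^\top(PG_\dn+G_\dn^\top P)z_0\geq 2\beta z_0^\top P z_0$, and since $z_0$ was arbitrary on the unit sphere (and both sides scale quadratically) we conclude $PG_\dn+G_\dn^\top P\succeq 2\beta P\succ 0$. No step here looks like a serious obstacle; the only subtlety is the one-sided derivative comparison in the necessity direction, where one must be careful with the sign of $s$ when passing from the inequality $h(s)\geq 0$ on $s\leq 0$ to a bound on $h'(0)$.
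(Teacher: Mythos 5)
Your proof is correct, and it takes essentially the route the paper intends: the paper gives no explicit argument, stating only that the proposition follows from the quadratic Lyapunov function theorem for linear systems, and your computation of $\tfrac{d}{ds}V(\mathbf{d}(s)z_0)$ along the flow of $\dot w = G_\dn w$, with the Gronwall estimate for sufficiency and the one-sided derivative comparison at $s=0$ for necessity, is precisely that Lyapunov argument written out in full. The sign handling in the necessity direction is done correctly, so no gaps remain.
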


Inspired by \cite{kawski1991} the homogeneous function and vector field are defined  as follows.
\begin{definition}[\cite{kawski1991}]
	A  vector field $f:\R^n\mapsto \R^n$ (a function $h:\R^n \!\mapsto\! \R$) is said to be $\dn$-homogeneous of degree $\mu\!\in\! \R$ if \vspace{-2mm}
	\[
 \begin{array}{c}
	f(\dn(s))=e^{\mu s} \dn(s) f(x), \quad \forall x\in\R^n, \quad \forall s\in \R,\\
	(\text{resp., } h(\dn(s))=e^{\mu s} h(x), \quad \forall x\in\R^n, \quad \forall s\in \R),
 \end{array} \vspace{-2mm}
	\]
	where $\dn$ is a continuous dilation in $\R^n$.
\end{definition}

The linear continuous dilation $\dn$ introduces an alternative topology in $\mathbb{R}^n$ by the so-called "homogeneous norm" \cite{kawski1995IFAC}, \cite{grune2000SIAM} providing related geometric objects (e.g., sphere).
\begin{definition}[\cite{polyakov2020book}]
    The set \vspace{-2mm}
    \[
    S_\dn^{n-1}(r) = \{x\in\mathbb{R}^n:\|\dn(-\ln r)x\|=1\}, \vspace{-2mm}
    \]
    is called a $\dn$-homogeneous sphere of the radius $r>0$ with the center at $\boldsymbol{0}$.
\end{definition}
The unit homogeneous shpere $S_\dn^{n-1}(1)$ coincides with the unit sphere $S^{n-1}(1)$ in Euclidean space and \vspace{-2mm}
\begin{equation}
    S_\dn^{n-1}(r) = \dn(-\ln r)S_\dn^{n-1}(1) = \dn(-\ln r )S^{n-1}(1). \vspace{-2mm}
\end{equation}
The above equation means that any homogeneous sphere can be obtained through scaling the unit sphere.

\begin{definition}[\cite{polyakov2019IJRNC}]\label{def:hom_norm}
	\textit{Let a continuous linear dilation $\mathbf{d}$ be strictly  monotone  with respect to a norm $\|\cdot\|$ in $\R^n$. The function $\|\cdot\|_\dn: \mathbb{R}^n \mapsto\R_+$ defined as $\|\mathbf{0}\|_{\dn}=0$ and 
	$$
	\|x\|_{\mathbf{d}}=e^{s}, \text { where } s \in \mathbb{R}:\left\|\mathbf{d}\left(-s\right) x\right\|=1, \quad x\neq \mathbf{0},
	$$
	is called the canonical homogeneous norm in $\mathbb{R}^n$.}
\end{definition}

The canonical homogeneous is a continuous function defined implicitly and its derivative has the form (see \cite{polyakov2020book}) \vspace{-2mm}
\begin{equation}  \!\tfrac{\partial\|x\|_\dn}{\partial x} \!=\! \|x\|_\dn\tfrac{x^\top\dn^\top(-\ln\|x\|_\dn)P\dn(-\ln\|x\|_\dn)}{x^\top\dn^\top(-\ln\|x\|_\dn)PG_\dn\dn(-\ln\|x\|_\dn)x}. \vspace{-2mm}
\end{equation}
provided that $\|x\|_{\dn}$ is induced by the weighted Euclidean norm $\|x\|=\sqrt{x^{\top}Px}$ with $P$ satisfying \eqref{eq:mon_cond}.
 
 The  homogeneity of for a linear vector field is characterized  as follows (\cite{polyakov2019IJRNC}, \cite{zimenko_etal_2020_TAC}).
\begin{lemma}\label{lem:nilpotent}
Let $\dn$ be a linear continuous dilation. The following claims are equivalent:\vspace{-2mm}
\begin{itemize}
    \item the linear vector field $x\mapsto Ax$ with $x\in\mathbb{R}^n$ and $A\in\mathbb{R}^{n\times n}$ is $\dn$-homogeneous of degree $\mu\neq 0$;
    \item the identity holds\vspace{-2mm}
\begin{equation}\label{eq:A}
AG_\dn = (\mu I + G_\dn)A, \vspace{-2mm}
\end{equation}
where $G_\dn\in\mathbb{R}^{n\times n}$ is a generator of linear dilation $\dn$;
\item the matrix $A$ is nilpotent;
\end{itemize}
\end{lemma}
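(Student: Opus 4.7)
The plan is to establish the three implications in the cycle $(1)\Leftrightarrow(2)\Rightarrow(3)\Rightarrow(2)$, using the explicit exponential representation \eqref{eq:dilation} of the dilation and the standard fact that the generator $G_\dn$ is anti-Hurwitz (in particular, has a finite spectrum).

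For the equivalence $(1)\Leftrightarrow(2)$, I would expand the homogeneity identity $A\dn(s)x = e^{\mu s}\dn(s)Ax$ into the matrix equality $Ae^{sG_\dn}=e^{\mu s}e^{sG_\dn}A$. Differentiating at $s=0$ immediately yields $AG_\dn=(\mu I + G_\dn)A$, so $(1)\Rightarrow(2)$ is just a one-line derivative argument. Conversely, assuming $(2)$ and noting that $\mu I$ commutes with $G_\dn$, a short induction gives $AG_\dn^k=(\mu I+G_\dn)^kA$ for all $k\in\mathbb{N}$; summing the Taylor series yields $Ae^{sG_\dn}=e^{s(\mu I+G_\dn)}A=e^{\mu s}\dn(s)A$, which is exactly $\dn$-homogeneity of degree $\mu$.

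For $(2)\Rightarrow(3)$, the key observation is the intertwining identity $A(G_\dn-\lambda I)=(G_\dn-(\lambda-\mu)I)A$, obtained by rearranging \eqref{eq:A}. Iterating gives $A(G_\dn-\lambda I)^k=(G_\dn-(\lambda-\mu)I)^kA$, so $A$ maps the generalized eigenspace of $G_\dn$ associated with $\lambda$ into the one associated with $\lambda-\mu$. Decomposing $\mathbb{C}^n$ into generalized eigenspaces of $G_\dn$ and tracing the orbit $v,\,Av,\,A^2v,\dots$ for $v$ in the generalized eigenspace for $\lambda$, one sees $A^jv$ lands in the generalized eigenspace for $\lambda-j\mu$. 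Because the spectrum of $G_\dn$ is a finite set and $\mu\neq 0$, the arithmetic progression $\{\lambda-j\mu\}_{j\in\mathbb{N}}$ eventually leaves the spectrum; the corresponding generalized eigenspace is then trivial, forcing $A^jv=\mathbf{0}$. Repeating this argument across the finitely many generalized eigenspaces and taking the maximum of the vanishing indices shows $A^N=0$ for some $N$, i.e.\ $A$ is nilpotent.

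The main obstacle is the converse $(3)\Rightarrow(2)$, because the matrix identity is a genuine constraint coupling $A$ to $G_\dn$; nilpotency of $A$ alone is insufficient unless the generator is compatible. My plan here is to exhibit an adapted generator via the Jordan structure of $A$: for a single nilpotent Jordan block of size $k$, choosing $G_\dn$ diagonal in the associated basis with entries forming an arithmetic progression of common difference $\mu$ makes the commutator $[A,G_\dn]=\mu A$ by direct verification on basis vectors. Extending block-diagonally across all Jordan blocks of $A$ and then shifting all diagonal entries by a sufficiently large positive constant restores the anti-Hurwitz property of $G_\dn$ without affecting the commutator relation (since scalar shifts commute with $A$). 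This produces a linear continuous dilation $\dn$ for which $(2)$ holds with the prescribed $\mu\neq 0$, closing the cycle. The subtle point to handle carefully will be ensuring the constructed $G_\dn$ remains real (which is automatic from working within the real Jordan decomposition) and strictly monotone with respect to some weighted Euclidean norm, which is guaranteed by Proposition~\ref{prop: 1} once the shift makes $G_\dn+G_\dn^\top\succ 0$.
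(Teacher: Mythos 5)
Your proposal is correct, and since the paper states Lemma~\ref{lem:nilpotent} without proof (it is quoted from \cite{polyakov2019IJRNC}, \cite{zimenko_etal_2020_TAC}), there is no in-paper argument to compare against; what you give is essentially the standard proof behind the cited result. The equivalence between homogeneity and the identity \eqref{eq:A} is handled exactly right: differentiating $Ae^{sG_\dn}=e^{\mu s}e^{sG_\dn}A$ at $s=0$ gives \eqref{eq:A}, and conversely propagating \eqref{eq:A} to $AG_\dn^k=(\mu I_n+G_\dn)^kA$ and summing the exponential series (using that $\mu I_n$ commutes with $G_\dn$) recovers the homogeneity identity for all $s$. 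Your spectral-shift argument for nilpotency is also sound: the intertwining relation $A(G_\dn-\lambda I_n)^k=(G_\dn-(\lambda-\mu)I_n)^kA$ shows $A$ maps the generalized eigenspace of $G_\dn$ for $\lambda$ into the one for $\lambda-\mu$ (interpreted as $\{\mathbf{0}\}$ when $\lambda-\mu$ is not an eigenvalue), and since the terms $\lambda-j\mu$ are pairwise distinct for $\mu\neq 0$ while the spectrum is finite, some $A^{j_0}v$ with $j_0\le n$ must vanish; once it vanishes it stays zero, so $A^n=\mathbf{0}$ on each generalized eigenspace and hence on $\mathbb{C}^n$.

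One point you identified and should keep explicit: as literally phrased, the lemma fixes the dilation $\dn$ in advance, and under that reading nilpotency does not imply \eqref{eq:A} (take $G_\dn=I_n$ and any nonzero nilpotent $A$; then \eqref{eq:A} reads $\mu A=\mathbf{0}$). The equivalence is only true with the quantifier ``for some linear continuous dilation $\dn$ and some $\mu\neq 0$'' attached to the first two items, which is how the cited works state it, and your construction proves precisely that existence claim: in a (real) Jordan basis of the nilpotent $A$, a diagonal generator whose entries form arithmetic progressions of step $\mu$ block by block satisfies $AG_\dn=(\mu I_n+G_\dn)A$, and adding $cI_n$ with $c>0$ large preserves this identity while making the generator anti-Hurwitz, so $e^{sG_\dn}$ is indeed a dilation. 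The additional remark on strict monotonicity is not needed for the lemma, but it is harmless: for $c$ large the symmetric part of $G_\dn$ is positive definite, so \eqref{eq:mon_cond} holds with $P=I_n$.
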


An asymptotically stable homogeneous system exhibits finite-time stability in the case on negative degree \cite{bhat_etal_2005_MCSS}. The following  homogeneous stabilizer for LTI system has been developed the series of the works \cite{polyakov2019IJRNC}, \cite{zimenko_etal_2020_TAC} and \cite{polyakov2020book}.

\begin{theorem} \label{thm:hom_linear}
A controllable linear system \eqref{eq:lin_sys} with the control law	\vspace{-2mm}\begin{equation}\label{eq:hom_con_P}
		u(x)\!=\!K_{0} x\!+\!\|x\|_{\dn}^{1+\mu}K \dn \left(-\ln \|x\|_{\dn}\right) x,\ \mu\!\in\![-1,0), \vspace{-2mm} 
	\end{equation}
 is globally uniformly  finite-time stable, where  $K = YX^{-1}$, $G_{\dn}=I_n+\mu G_0$, $K_{0}=Y_0(G_0-I_n)^{-1}$,  the pair $G_0\in \R^{n\times n}$, $Y_0\in \R^{m\times n}$ is a solution of the matrix equation:\vspace{-2mm}
 \begin{equation}\label{eq:G0}
		AG_0+BY_0=G_0A+A, \quad G_0B=\boldsymbol{0}. \vspace{-2mm}
\end{equation}
and the pair $X\in\R^n$ and $Y\in \R^{m\times n}$ is a solution of the linear algebraic system:\vspace{-2mm}
\begin{equation}
		\!\!\!\!\left\{\!\begin{aligned}
			&\!X A_{0}^{\top}\!+\!A_{0} X\!+\!Y^{\top}\! B^{\top}\!\!+\!B Y\!+\!\rho\!\left(X G_{\dn}^{\top}\!+\!G_{\dn} X\right)\!=\!\mathbf{0}, \\
			&\!X G_{\dn}^{\top}+G_{\dn} X \succ 0, \quad X \succ 0.
		\end{aligned}\right. \vspace{-1mm}
		\label{eq:LMI0}
	\end{equation}
\end{theorem}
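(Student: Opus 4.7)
The plan is to reduce the global stabilization problem to stability on the unit sphere via dilation symmetry, and then invoke the Lyapunov inequality characteristic of $\dn$-homogeneous systems of negative degree. The argument has three structural pieces: the preliminary linear feedback $K_0$ turns the drift $A_0 := A + BK_0$ into a $\dn$-homogeneous (hence nilpotent) vector field of degree $\mu$; the nonlinear correction is $\dn$-homogeneous of the matching degree; and the LMI \eqref{eq:LMI0} forces the homogeneous norm to be strictly decreasing along closed-loop trajectories.

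\textbf{Step 1 (Homogeneity of the drift).} First I would check that \eqref{eq:G0} implies $A_0 G_\dn = (\mu I_n + G_\dn) A_0$. Using $G_\dn = I_n + \mu G_0$, $K_0 = Y_0(G_0 - I_n)^{-1}$, the relation $AG_0 + BY_0 = G_0 A + A$, and the orthogonality $G_0 B = \mathbf{0}$, this identity reduces to a short algebraic manipulation. Lemma \ref{lem:nilpotent} then yields $\dn$-homogeneity of $A_0$ of degree $\mu$ together with nilpotency. In addition, $G_\dn B = B + \mu G_0 B = B$, so $\dn(s) B = e^s B$ for every $s \in \R$.

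\textbf{Step 2 (Reduction to the sphere).} Setting $z = \dn(-\ln\|x\|_\dn)\,x \in S^{n-1}(1)$ and using $\|\dn(s)x\|_\dn = e^s\|x\|_\dn$, one checks that $v(x) = \|x\|_\dn^{1+\mu} K\,\dn(-\ln\|x\|_\dn)\,x$ is $\dn$-homogeneous of degree $1+\mu$ as a map $\R^n \to \R^m$. Combined with $\dn(s)B = e^s B$ from Step 1, the term $B v(x)$ is $\dn$-homogeneous of degree $\mu$, and the closed-loop vector field factorises as
\[
A_0 x + B v(x) \;=\; \|x\|_\dn^{\mu}\,\dn(\ln\|x\|_\dn)\,(A_0 + BK)\,z.
\]

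\textbf{Step 3 (Lyapunov inequality and finite-time bound).} Taking $V(x) = \|x\|_\dn$, substituting the expression above into the displayed formula for $\partial\|x\|_\dn/\partial x$, and using $\dn(-\ln\|x\|_\dn)\dn(\ln\|x\|_\dn) = I_n$, I would obtain
\[
\dot V \;=\; \|x\|_\dn^{1+\mu}\,\frac{z^\top P (A_0 + BK)\, z}{z^\top P G_\dn z}.
\]
With $X = P^{-1}$ and $Y = KX$, pre- and post-multiplying the equality in \eqref{eq:LMI0} by $P$ gives $P(A_0 + BK) + (A_0 + BK)^\top P = -\rho\,(PG_\dn + G_\dn^\top P)\prec 0$, the strict sign coming from Proposition \ref{prop: 1}. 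Symmetrising numerator and denominator yields $\dot V \le -\rho\,\|x\|_\dn^{1+\mu}$. Since $1+\mu \in [0,1)$, a standard comparison argument gives $\|x(t)\|_\dn = 0$ for all $t \ge \|x_0\|_\dn^{-\mu}/(-\rho\mu)$, which is the claimed global uniform finite-time stability.

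\textbf{Anticipated obstacles.} The genuinely nontrivial point is the existence of a solution to the coupled system \eqref{eq:G0} and feasibility of \eqref{eq:LMI0} under mere controllability of $(A,B)$, which is established by passage to the Brunovsky canonical form in \cite{polyakov2019IJRNC}, \cite{polyakov2020book}. A second, more technical issue is that $\|\cdot\|_\dn$ and the closed-loop right-hand side are not $C^1$ at the origin, so the Lyapunov inequality must be read in the Filippov/upper Dini sense near $x=\mathbf{0}$; away from the origin everything is smooth and the estimates above hold pointwise.
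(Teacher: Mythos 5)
Your proposal is correct and follows essentially the same route the paper takes (by way of the cited works \cite{polyakov2019IJRNC}, \cite{polyakov2020book}, summarized in the bullets after Theorem~\ref{thm:hom_linear}): homogenization of the drift by $K_0$ so that $A_0$ is nilpotent and $\dn$-homogeneous of degree $\mu$, the canonical homogeneous norm as Lyapunov function with the LMI \eqref{eq:LMI0} yielding $\tfrac{d}{dt}\|x\|_\dn=-\rho\|x\|_\dn^{1+\mu}$ (cf.\ \eqref{eq:d_V_x}), and negative degree plus a comparison argument giving the finite settling time, with Filippov solutions handling the case $\mu=-1$. Your deferrals (feasibility of \eqref{eq:G0}, \eqref{eq:LMI0} under controllability, and the non-smoothness at the origin) are exactly the points the paper also delegates to \cite{polyakov2016IJRNC} and Theorem~\ref{thm:Filippov}.
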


\begin{itemize}
  \item If the pair $\{A,B\}$ is controllable then the linear algebraic systems \eqref{eq:G0} and \eqref{eq:LMI0} always have solutions (see \cite{polyakov2016IJRNC}).
        \item Canonical homogeneous norm $\|x\|_\dn$ is a Lyapunov function for the closed system \eqref{eq:lin_sys}, \eqref{eq:hom_con_P} and 
    \vspace{-2mm}\begin{equation}\label{eq:d_V_x}
            \tfrac{\partial \|x\|_\dn}{\partial x}(Ax+Bu(x))= -\rho \|x\|_\dn^{1+\mu}. \vspace{-2mm}
        \end{equation}
        \item The linear feedback $K_0x$ homogenizes the system and guarantees that the matrix $A_0=A+BK_0$ is nilpotent. 
        \item The closed-loop system with the control \eqref{eq:hom_con_P} 
        is $\dn$-homogeneous of degree $\mu$.
    \end{itemize}
For $\mu=-1$, solutions of the closed-loop system \eqref{eq:f_q} are understood in the sense of Filippov. Let us recall the Lyapunov theorem
for differential equations with discontinuous right-hand sides. 
\begin{theorem}[\cite{filippov2013differential},  page 152]\label{thm:Filippov}
    Consider the system
$
\dot{x} = f(t, x)$, $ t>0
$,
where $x$ is the state vector and $f$ is a piece-wise continuous vector field. Suppose there exists a positive definite radially unbounded function $V:C(\mathbb{R}^n, \mathbb{R}_+)\cap C^{1}(\mathbb{R}^n\setminus\{\boldsymbol{0}\}, \mathbb{R}_+)$:
 $$
 \dot{V}(x)=\tfrac{\partial V}{\partial x} f(t,x)< 0,
 $$ almost everywhere in $\R\times\R^n$.
Then, the system is globally asymptotically stable provided that its solutions are understood in the sense of Filippov.
\end{theorem}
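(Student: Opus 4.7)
\textbf{Proof plan for Theorem \ref{thm:Filippov}.}
The overall strategy is classical: use $V$ as a Lyapunov certificate for the differential inclusion $\dot{x}\in F(t,x)$ associated to $f$ via Filippov's construction, and combine (i) positive definiteness to obtain Lyapunov stability, (ii) radial unboundedness to obtain global boundedness of trajectories, and (iii) strict negativity of $\dot V$ to rule out accumulation at any nonzero point. First I would recall that a Filippov solution $x(\cdot)$ is absolutely continuous and satisfies $\dot x(t)\in F(t,x(t))$ for almost every $t$, where $F(t,x)=\bigcap_{\varepsilon>0}\bigcap_{\mu(N)=0}\overline{\mathrm{co}}\{f(t,y):y\in B(\varepsilon,x)\setminus N\}$. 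Since $f$ is piecewise continuous, the set $\Sigma$ of discontinuities of $f(t,\cdot)$ has Lebesgue measure zero in $\mathbb{R}^n$, and $F(t,x)=\{f(t,x)\}$ off $\Sigma$.

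Next I would establish monotonicity of $t\mapsto V(x(t))$. Away from $\boldsymbol 0$ the function $V$ is $C^1$, so $V\circ x$ is absolutely continuous on any time interval on which $x(t)\neq\boldsymbol 0$, and $\tfrac{d}{dt}V(x(t))=\tfrac{\partial V}{\partial x}(x(t))\,\dot x(t)$ for a.e.\ such $t$. To use the hypothesis $\tfrac{\partial V}{\partial x}f(t,x)<0$ a.e.\ in $\mathbb{R}\times\mathbb{R}^n$, I would invoke the standard measure-theoretic lemma: for an absolutely continuous curve $x(\cdot)$ transversal to $\Sigma$, the set $\{t:x(t)\in\Sigma\}$ has measure zero (this is where the piecewise-continuous structure of $f$ and standard Filippov regularization arguments enter). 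Consequently, at almost every $t$ one has $\dot x(t)=f(t,x(t))$ and $\tfrac{d}{dt}V(x(t))<0$, so $V\circ x$ is strictly decreasing until the origin is reached.

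From monotonicity and positive definiteness of $V$, standard comparison with the class-$\mathcal{K}$ lower and upper bounds on $V$ near $\boldsymbol 0$ yields Lyapunov stability: for any $\varepsilon>0$, pick $\alpha=\min_{\|x\|=\varepsilon}V(x)>0$ and note that any solution starting in $\{V\le\alpha/2\}$ cannot cross $\|x\|=\varepsilon$. Radial unboundedness upgrades this to global boundedness of all trajectories: $\{V\le V(x_0)\}$ is compact and positively invariant. Finally, $V(x(t))$ is non-increasing and bounded below by $0$, hence converges to some $V_\infty\ge 0$; if $V_\infty>0$, the trajectory stays in the compact annulus $\{V_\infty\le V\le V(x_0)\}$, on which $\tfrac{\partial V}{\partial x}f(t,x)$ is uniformly bounded away from zero by an argument using compactness and the strict inequality (with a standard upper-semicontinuity refinement for the set-valued part), yielding a strictly negative upper bound on $\tfrac{d}{dt}V(x(t))$ and hence $V(x(t))\to-\infty$, a contradiction. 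Therefore $V_\infty=0$, and by positive definiteness $x(t)\to\boldsymbol 0$.

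The main obstacle is the technical step of passing from the \emph{almost everywhere in $\mathbb{R}\times\mathbb{R}^n$} hypothesis on $\tfrac{\partial V}{\partial x}f$ to \emph{almost everywhere in $t$} along a specific Filippov solution, and the related question of bounding $\tfrac{\partial V}{\partial x}v$ uniformly over $v\in F(t,x)$ on compact sets avoiding the origin. Both issues are handled in the convexified Filippov framework by exploiting upper semicontinuity of $F$, piecewise continuity of $f$, and the transversality properties of absolutely continuous curves to null sets; this is precisely the content invoked in \cite{filippov2013differential}, and it is the only non-routine piece of the argument, the remainder being a straightforward extension of the smooth Lyapunov theorem.
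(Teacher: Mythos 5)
The paper itself does not prove this statement: it is recalled verbatim, with a citation, from Filippov's book, so the only question is whether your argument stands on its own. It does not, for two related reasons.

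First, your key measure-theoretic step is the wrong tool. A Filippov solution of a piecewise-continuous system is in general \emph{not} transversal to the discontinuity set $\Sigma$: sliding motions dwell on $\Sigma$ on time intervals of positive measure (this is the very phenomenon the Filippov framework exists to handle, and it is exactly what occurs for the quantized feedback in this paper, whose discontinuity set consists of conical cell boundaries; the chattering in the simulations is a symptom of it). Hence you cannot conclude that $\dot x(t)=f(t,x(t))$ for a.e.\ $t$, and the a.e.-in-$(t,x)$ inequality on $\tfrac{\partial V}{\partial x}f$ does not transfer to the trajectory via your transversality lemma. The standard route is different: since $F(t,x)$ is the closed convex hull of essential limit values of $f$ near $x$ and $\tfrac{\partial V}{\partial x}$ is continuous away from the origin, any a.e.\ upper bound of $\tfrac{\partial V}{\partial x}f$ by a \emph{continuous} function passes to $\sup_{v\in F(t,x)}\tfrac{\partial V}{\partial x}(x)\,v$, and thence to $\tfrac{d}{dt}V(x(t))$ for a.e.\ $t$, whether or not the solution slides.

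Second, your compactness step (``uniformly bounded away from zero on the annulus'') does not follow from a bare pointwise strict inequality holding almost everywhere: the essential supremum of $\tfrac{\partial V}{\partial x}f$ over a compact annulus may well be $0$, and after convexification the strict sign is lost on $\Sigma$. In fact the statement as literally formulated fails without a continuous negative-definite majorant: take $n=2$, $f(x)=(-\mathrm{sign}(x_1),0)$, $V(x)=\tfrac{1}{2}|x|^2$; then $\tfrac{\partial V}{\partial x}f=-|x_1|<0$ a.e., yet every point of the $x_2$-axis is a Filippov equilibrium (since $0\in F(x)$ there), so asymptotic stability does not hold. What Filippov's theorem actually assumes --- and what the paper verifies in its application, where $\tfrac{d}{dt}\|x\|_{\dn}\le-\rho<0$ with $\rho$ a constant --- is a bound $\tfrac{\partial V}{\partial x}f\le -W(x)$ with $W$ continuous and positive definite; under that hypothesis both steps above go through and your outer Lyapunov skeleton (stability from positive definiteness, boundedness from radial unboundedness, convergence by contradiction on annuli) is fine. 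As written, however, the proposal leans on a lemma that is false for sliding solutions and defers ``the only non-routine piece'' back to the cited book, which is precisely the content that was to be proved.
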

The above result means that the stability analysis of the discontinuous ODE can be done in the conventional way (using the right-hand side of the ODE)  provided that Lyapunov function $V$ is continuously differentiable away from the origin and solutions are understood in the sense of Filippov. A Lipschitz continuous function $V$ cannot be utilized in the same way (see, e.g., \cite{utkin2013sliding}) despite its differentiability almost everywhere.

\section{Main results}\label{sec:main}

\subsection{Generalized homogeneous spherical quantizer}

 One of key features of homogeneous systems is a completeness of its global definition based on local data. Indeed,
 if a homogeneous vector field is defined on a sphere then it can be expanded to the whole $\mathbb{R}^n\backslash\{\boldsymbol{0}\}$ using the dilation (see the Definition 4). This inspires the spherical quantization for homogeneous systems. 

 It is known that any non-zero element of $\mathbb{R}^n$ can be projected to the unit sphere using the uniform dilation. The corresponding projector $\pi: \mathbb{R}^n\setminus\{\boldsymbol{0}\} \mapsto S^{n-1}(1)$ is defined as \vspace{-1mm}
$$
\pi(x)=\tfrac{x}{\|x\|}=\mathbf{d}(-\ln \|x\|) x, \vspace{-1mm}
$$
where $\mathbf{d}(s)=e^{s I_n}$ is the uniform dilation. A similar projection can be constructed using any strictly monotone continuous dilation $\mathbf{d}$ (see \cite{polyakov2020book}).
Indeed, for a given strictly monotone dilation $\dn(s) = e^{G_\dn s}$, $s\in\mathbb{R}$, the canonical homogeneous norm $\|x\|_\dn$ is continuous on $\mathbb{R}^n$ and Lipschitz continuous on $\mathbb{R}^n\setminus \{\boldsymbol{0}\}$.
Moreover, the canonical homogeneous norm introduces a $\dn$-homogeneous projector $\pi_\dn:\mathbb{R}^n\setminus\{\boldsymbol{0}\}\mapsto S^{n-1}(1)$,\vspace{-1mm}
\begin{equation}
    \pi_\dn(x) = \dn(-\ln\|x\|_\dn)x.\vspace{-1mm}
\end{equation}
since, by definition of the canonical homogeneous norm we have $\|\pi_\dn(x)\|=1$. Notice that    $\pi_\dn(\dn(s)x) = \pi_\dn(x)$, $\forall s\in\mathbb{R}, \forall x\neq \boldsymbol{0}$. 

\begin{definition}
Let a linear continuous dilation $\dn$ be strictly monotone  with respect to the weighted Euclidean norm  $\|x\|=\sqrt{x^\top P x}$, $x\in\mathbb{R}^n$, and $\mathcal{Q}_s\subset S^{n-1}(1)$ be a discrete set. A quantizer $\mathfrak{q}_{\pi_\dn}: \mathbb{R}^n\mapsto \mathcal{Q}_s$  given by \vspace{-1mm}
\begin{equation}
    \mathfrak{q}_{\pi_\dn}(x) = \left\{
        \begin{aligned}
         &\mathfrak{q}_s(\pi_\dn(x)), &x\neq \boldsymbol{0},\\
         & \boldsymbol{0}, & x= \boldsymbol{0},
        \end{aligned}
    \right. \vspace{-1mm}
\end{equation}
is called $\dn$-homogeneous spherical quantizer, 
where $\mathfrak{q}_s:S^{n-1}(1)\mapsto \mathcal{Q}_s\subset S^{n-1}(1)$ is a discrete map, $|\mathcal{Q}_s|<+\infty$.
\end{definition}
 
 The quantizer $\mathfrak{q}_{\pi_\dn}$ given above is $\dn$-homogeneous (symmetric with respect to dilation $\dn$) as follows \vspace{-1mm}
\[
\mathfrak{q}_{\pi_\dn}(\dn(s)x) = \mathfrak{q}_{\pi_\dn}(x), \ \forall s\in\mathbb{R}, \ \forall x\in\mathbb{R}^n. \vspace{-1mm}
\]
The homogeneous spherical quantization consists of two steps:  first, state $x\in\mathbb{R}^n\setminus\{\boldsymbol{0}\}$ in projected to the unit sphere using the homogeneous projector $\pi_\dn$, and, next, the discrete map $\mathfrak{q}_s$ quantizes the projected state.
Since the value $\pi_\dn(x)$ belongs to the unit sphere, the following proposition is straightforward due to the compactness of the sphere.
\begin{proposition}\label{prop:eps}
    For a $\dn$-homogeneous spherical quantizer $\mathfrak{q}_{\pi_\dn}:\mathbb{R}^n\mapsto \mathcal{Q}_s\subset S^{n-1}(1)$,  any given $\delta>0$, there exists a set $\mathcal{Q}_s$ such that $|\mathcal{Q}_s|=N<+\infty$ and \vspace{-2mm}
    \[
    \|\mathfrak{q}_{\pi_\dn}(z)-z\|\le \delta, \quad \forall z\in S^{n-1}(1).
    \]
\end{proposition}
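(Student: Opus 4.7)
The plan is to reduce the statement to a standard compactness argument on the unit sphere. The weighted Euclidean norm $\|x\|=\sqrt{x^\top P x}$ is equivalent to the Euclidean norm because $P\succ 0$, so $S^{n-1}(1)$ is a closed bounded, hence compact, subset of $\mathbb{R}^n$. The key observation that makes the proposition essentially trivial is the identity already noted in the excerpt: the $\dn$-homogeneous unit sphere $S_\dn^{n-1}(1)$ coincides with $S^{n-1}(1)$. Consequently, every $z\in S^{n-1}(1)$ satisfies $\|z\|_\dn=1$, so $\pi_\dn(z)=\dn(0)z=z$, and the action of $\mathfrak{q}_{\pi_\dn}$ on the unit sphere reduces to the action of $\mathfrak{q}_s$.

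First, I would fix $\delta>0$ and cover $S^{n-1}(1)$ by the open family $\{B(\delta,z)\}_{z\in S^{n-1}(1)}$. By compactness, one extracts a finite subcover $\{B(\delta,q_i)\}_{i=1}^{N}$ with $q_i\in S^{n-1}(1)$, and sets $\mathcal{Q}_s=\{q_1,\ldots,q_N\}$, whence $|\mathcal{Q}_s|=N<+\infty$. Next, I would split the covering into a disjoint partition of $S^{n-1}(1)$ by the greedy rule $\mathcal{C}_1=B(\delta,q_1)\cap S^{n-1}(1)$ and $\mathcal{C}_i=\bigl(B(\delta,q_i)\cap S^{n-1}(1)\bigr)\setminus \bigcup_{j<i}\mathcal{C}_j$, and define $\mathfrak{q}_s(z)=q_i$ whenever $z\in\mathcal{C}_i$. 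By the very choice of cells, $\|\mathfrak{q}_s(z)-z\|<\delta$ for every $z\in S^{n-1}(1)$.

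Finally, I would combine the two observations: for any $z\in S^{n-1}(1)$, since $\pi_\dn(z)=z$, we have $\mathfrak{q}_{\pi_\dn}(z)=\mathfrak{q}_s(\pi_\dn(z))=\mathfrak{q}_s(z)$, and hence $\|\mathfrak{q}_{\pi_\dn}(z)-z\|<\delta\le \delta$. This yields the claimed bound with a finite seed set.

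There is no real obstacle in the argument; the only step that deserves a brief justification is the identification $\pi_\dn|_{S^{n-1}(1)}=\mathrm{id}$, which follows directly from the definition of the canonical homogeneous norm together with the equality $S_\dn^{n-1}(1)=S^{n-1}(1)$ stated earlier. The proposition is therefore a one-line consequence of finite covering of a compact set, once this identification is in place.
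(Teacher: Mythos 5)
Your argument is correct and follows exactly the route the paper intends: the paper dismisses this proposition as ``straightforward due to the compactness of the sphere,'' and your finite-subcover construction together with the observation that $\pi_\dn$ restricted to $S^{n-1}(1)$ is the identity (since $\|z\|=1$ forces $\|z\|_\dn=1$) is precisely that argument filled in. The only cosmetic caveat is that your greedy cells may be empty or may not contain their nominal seed $q_i$ (as the general quantizer definition in Section~2 would ask), but this does not affect the error bound or the finiteness of $\mathcal{Q}_s$, which is all the proposition claims.
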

\vspace{-5mm}
The $\dn$-homogeneous spherical quantizer is a finite static quantizer being invariant with respect to linear dilation $\dn$. Issues of practical realization of the spherical quantization algorithm  are discussed in Section \ref{sec:relization}.

\subsection{Finite-time stabilization with homogeneous spherical quantization}

The dilation symmetry of a closed-loop system  may not be preserved in the presence of quantization. For example, the scalar closed-loop system \vspace{-1mm}
\[
\dot{x} = ax \!+\! u(x),\ a\!\in\!\mathbb{R},  \ u(x) \!=\! -|x|^{-0.5}sign(x) -ax. \vspace{-1mm}
\]
is standard homogeneous of  degree $-0.5$. However, for $a\neq0$,  the open-loop system and state feedback  separately do not admit the same dilation symmetry. That is why the closed-loop system with  the quantized state feedback $\dot x=ax + u(\mathfrak{q})$ is not homogeneous in the general case.  
Based on this observation we first study the case, when the $\dn$-homogeneous spherical quantization does not destroy the dilation symmetry of the closed-loop system. For MIMO LTI system, the latter is possible if the matrix $A$ is $\dn$-homogeneous (see Lemma \ref{lem:nilpotent}). In this case, the equation \eqref{eq:G0} has solution $Y_0 = K_0 = \boldsymbol{0}$, the feedback \eqref{eq:hom_con_P} contains only the nonlinear term: \vspace{-1mm}
\[
u_{\text{hom}}(x) = \|x\|_\dn^{1+\mu}K\dn(-\ln\|x\|_\dn)x. \vspace{-1mm}
\] 
The open loop and closed loop systems are homogeneous, and the homogeneity property allows us to project $u_{\text{hom}}$ onto the $\dn$-homogeneous sphere. 
Taking into account$\|\pi_{\dn}(x)\|_\dn=1$, we derive:\vspace{-2mm}
\[
u_{\text{hom}}(\pi_{\dn}(x)) = K\pi_{\dn}(x), \vspace{-2mm}
\]
where $\pi_{\dn}$ is the $\dn$-homogeneous projector (see above).
The homogeneous control becomes a static feedback of the projected state. Following this projection strategy,  homogeneous feedback with a $\dn$-homogeneous spherical quantizer is constructed, and stability is investigated. 

\begin{theorem}\label{thm:limited_data}
  Let the parameters  $G_0\in \R^{n\times n}$, $K_0\in \R^{ m\times n}$, $G_{\dn}=I_n-G_0$ and $A_0=A+BK_0$ be defined as in Theorem \ref{thm:hom_linear}. 
Let  $X \in \mathbb{R}^{n \times n}$ and $Y \in \mathbb{R}^{m \times n}$ be  a solution of the LMI \vspace{-1mm}
\begin{subequations}\label{eq:LMI_1}    
    \begin{align}
        &\quad
        XG_\dn^\top + G_\dn X\succ 0, \ X\succ 0,\label{eq:LMI_1_1}\\    &\begin{bmatrix}
X A^{\top}_0 \!+\! A_0 X \!+\! Y^{\top}\! B^{\top} \!+\! B Y \!+\! \delta^2 \tau X & BY \\
Y^\top B^\top & -\tau X
\end{bmatrix} \!\!\prec\! 0. \label{eq:LMI_1_3}
\end{align} \vspace{-1mm}
\end{subequations}
for some $\delta>0$ and some $\tau>0$.
Let the canonical homogeneous norm $\|\cdot\|_{\dn}$ be induced by norm $\|x\|=\sqrt{x^{\top}X^{-1}x}$. 
Let $\mathfrak{q}_{\pi_\dn}:\mathbb{R}^n\mapsto \mathcal{Q}_s\subset S^{n-1}(1)$ be a $\dn$-homogeneous spherical quantizer such that \vspace{-1mm}
$$
\|\mathfrak{q}_{\pi_\dn}(z)-z\|\le \delta, \ \forall z\in S^{n-1}(1). \vspace{-1mm}
$$
Then, the closed-loop system \eqref{eq:lin_sys}, 
\begin{equation}\label{eq:u_c_2}
u =  K\mathfrak{q}_{\pi_\dn}(x),\quad   K = YX^{-1}. 
\end{equation}
 with $g=\boldsymbol{0}$ is  globally (locally) uniformly finite-time stable if $K_0=\mathbf{0}$  (resp.,  $K_0\neq\mathbf{0}$).
\end{theorem}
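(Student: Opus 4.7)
The plan is to take $V(x)=\|x\|_\dn$ (the canonical homogeneous norm induced by $P=X^{-1}$) as a Lyapunov function; Proposition \ref{prop: 1} together with \eqref{eq:LMI_1_1} ensures that $V$ is well-defined, positive-definite, radially unbounded and of class $C^1$ on $\mathbb{R}^n\setminus\{\boldsymbol{0}\}$. I would split $\mathfrak{q}_{\pi_\dn}(x)=\pi_\dn(x)+e(x)$ with $\|e(x)\|\le\delta$ and use $A=A_0-BK_0$ to rewrite the closed loop as
\[
\dot x=A_0 x+BK\pi_\dn(x)+BK\,e(x)-BK_0 x.
\]

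Next I would compute $\dot V=\nabla V\cdot\dot x$ via the explicit formula for $\partial\|x\|_\dn/\partial x$ given after Definition \ref{def:hom_norm}. Two identities drive the simplification: first, $\mathbf{d}(s)B=e^s B$ (since $G_0B=\boldsymbol{0}$ and $G_\dn=I_n-G_0$); second, $A_0\mathbf{d}(s)=e^{-s}\mathbf{d}(s)A_0$ (since $A_0$ is nilpotent with $\mu=-1$ in \eqref{eq:A} by Lemma \ref{lem:nilpotent}). Substituting $x=\mathbf{d}(\ln\|x\|_\dn)\xi$ with $\xi=\pi_\dn(x)\in S^{n-1}(1)$ collapses all dilation factors to
\[
\dot V=\frac{\xi^\top P(A_0+BK)\xi+\xi^\top PBK\,e(x)-\xi^\top PBK_0\mathbf{d}(\ln\|x\|_\dn)\xi}{\xi^\top PG_\dn\xi},
\]
and the denominator is uniformly bounded below on $S^{n-1}(1)$ by the strict monotonicity of $\dn$ encoded in \eqref{eq:LMI_1_1}.

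To bound the numerator, I would pre/post-multiply \eqref{eq:LMI_1_3} by $\mathrm{diag}(P,P)$ and apply the Schur complement to obtain
\[
(A_0+BK)^\top P+P(A_0+BK)+\delta^2\tau P+\tau^{-1}PBKXK^\top B^\top P\prec 0.
\]
Combined with Young's inequality $2\xi^\top PBKe\le\tau^{-1}\xi^\top PBKXK^\top B^\top P\xi+\tau\|e\|_P^2$ and the bound $\|e\|_P^2\le\delta^2$, the $\tau^{-1}PBKXK^\top B^\top P$ and $\delta^2\tau P$ terms exactly cancel, leaving the principal part of the numerator bounded above by some $-c_0<0$ uniformly over $\xi\in S^{n-1}(1)$.

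The two cases then split. If $K_0=\boldsymbol{0}$, the residual vanishes, and since the closed-loop is $\dn$-homogeneous of degree $-1$ (note $e(\mathbf{d}(s)x)=e(x)$ is inherited from the dilation-invariance of $\mathfrak{q}_{\pi_\dn}$ and $\pi_\dn$), one gets $\dot V\le-c$ globally on $\mathbb{R}^n\setminus\{\boldsymbol{0}\}$; hence $V(x(t))\le V(x_0)-ct$ and the origin is reached in finite time. If $K_0\neq\boldsymbol{0}$, the residual is bounded by $\|K_0\|\cdot\|\mathbf{d}(\ln\|x\|_\dn)\|\le\|K_0\|\cdot\|x\|_\dn^\beta$ on $\{\|x\|_\dn\le 1\}$ by Definition \ref{def:monocity}, which tends to $0$ as $\|x\|_\dn\to 0$; choosing $r>0$ small enough makes this residual smaller than $c_0/2$, so $\dot V\le-c_0/(2\,\max_{S^{n-1}(1)}\xi^\top PG_\dn\xi)$ on $\{\|x\|_\dn\le r\}$, yielding local finite-time stability. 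In both cases Theorem \ref{thm:Filippov} legitimises the Lyapunov reasoning for the discontinuous closed loop. The main obstacle I anticipate is making the LMI--Schur--Young chain yield a genuinely strict negative bound on the cross terms (rather than merely non-positive), and, in the local case, quantifying the admissible radius $r$ explicitly via the monotonicity exponent $\beta$.
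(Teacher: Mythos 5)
Your proposal is correct and follows essentially the same route as the paper: take $V=\|x\|_\dn$ induced by $P=X^{-1}$, use the dilation identities $\dn(s)B=e^sB$ and the homogeneity of $A_0$ to reduce the derivative to a quadratic form in $(\pi_\dn(x),e)$ on the unit sphere, absorb the quantization error via the LMI \eqref{eq:LMI_1_3}, conclude $\dot V\le -c$ (hence finite-time convergence, justified by Theorem \ref{thm:Filippov}), and treat the $K_0\neq\mathbf{0}$ term as a perturbation vanishing near the origin. Your Schur-complement-plus-Young step is just the completion-of-squares form of the paper's S-procedure with the same multiplier $\tau$, so the two arguments are equivalent.
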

\begin{proof}
If $K_0=\mathbf{0}$ then $A_0=A$ and the LTI system in $\dn$-homogeneous: \vspace{-1mm}
\begin{equation}\label{eq:A_B_d}
    \dn(s)A_0 = e^{- s}A_0\dn(s), \ \dn(s)B = e^{s}B, \ s\in \mathbb{R}. \vspace{-1mm}
\end{equation}
where $\dn$ is generated by $G_{\dn}=I_n-G_0$. In this case, the closed-loop system is $\dn$-homogeneous due to $\dn$-homogeneity of the quantizer and $\dn$-homogeneity of the controller.
Calculating the time derivative of the Lyapunov function candidate $\|x\|_\dn$ along trajectories of the system \eqref{eq:f_q}, \eqref{eq:u_c_2} we derive \vspace{-1mm}
  \begin{equation}
    \tfrac{d \|x\|_\dn}{dt} \!= \!\|x\|_\dn\tfrac{2x^\top\dn^\top(-s_x)P\dn(-s_x)[A_0x + BK\mathfrak{q}_{\pi_\dn}\left(x\right)]}{x^\top\dn^\top(-s_x)(G_\dn^\top P + PG_\dn)\dn(-s_x)x}, \vspace{-1mm}
\end{equation}
where $s_x=\ln\|x\|_\dn$.
  
Using the dilation invariance of $\mathfrak{q}_{\pi_\dn}$, we obtain   \vspace{-1mm}
\begin{equation}\label{eq:d_V}
        \tfrac{d \|x\|_\dn}{dt} \!=\! 
    \tfrac{\left[\begin{smallmatrix}\pi_\dn(x)\\
            \sigma
\end{smallmatrix}\right]^\top\!\!\left[\begin{smallmatrix}    
(A_0\!+\!BK)^\top\! P + P(A_0 \!+\! BK) & PBK\\
K^\top B^\top P & \boldsymbol{0}
\end{smallmatrix}\right]\!\!
\left[\begin{smallmatrix}\pi_\dn(x)\\
            \sigma
\end{smallmatrix}\right]}{\pi_\dn^\top(x)(G_\dn^\top P + PG_\dn)\pi_\dn(x)},
\end{equation}
where $\sigma=\mathfrak{q}_{\pi_\dn}(\pi_{\dn}(x))-\pi_{\dn}(x)$.
On the other hand, since $\|\sigma\|\le\delta $ and  $ \|\pi_\dn(x)\|=1$,  it holds \vspace{-1mm}
\begin{equation}
\|\sigma\|\!\le\!\delta\|\pi_\dn(x)\|\,\Leftrightarrow\,\left[\begin{smallmatrix}\pi_\dn(x)\\
            \sigma
\end{smallmatrix}\right]^{\!\top}\!\!\left[\begin{smallmatrix}    
 -\delta^2 P & \boldsymbol{0}\\
\boldsymbol{0} & P
\end{smallmatrix}\right]\!\!
\left[\begin{smallmatrix}\pi_\dn(x)\\
            \sigma
\end{smallmatrix}\right]\!\le\! 0. \vspace{-1mm}
\end{equation}
Then according to S-procedure \cite{boyd2004convex}, the norm $\|x\|_\dn$ is Lyapunov function of system \eqref{eq:lin_sys}, \eqref{eq:u_c_2} if and only if there exists a positive constant $\tau>0$ such that \vspace{-1mm}
\begin{equation}
W:=\!\!\left[\begin{smallmatrix}
 A^{\top}_0P + PA_0 + K^{\top} B^{\top}P + B KP + \delta^2 \tau P & PBK \\
K^\top B^\top P & -\tau P
\end{smallmatrix}\right]\prec 0. \vspace{-1mm}
\end{equation}
This matrix inequality is equivalent to \eqref{eq:LMI_1_3} for $X=P^{-1}$ and $K = YX^{-1}$.

{Since $W\prec 0$, $P\succ 0$ and $G_\dn^\top P + P G_\dn\succ 0$ then there exists $\rho>0$ : $W\le -\rho \left(\begin{smallmatrix} G_\dn^\top P + P G_\dn & \mathbf{0}\\\mathbf{0} & P\end{smallmatrix} \right)$. Hence, taking into account $\pi_{\dn}^{\top}(x) P\pi_{\dn}(x)=1$ we derive $\tfrac{d\|x\|_\dn}{dt} < -\rho$  almost everywhere on $\R^n$.  Applying Theorem \ref{thm:Filippov} we conclude 
global asymptotic stability of the system, while the negative homogeneity degree of the closed-loop system implies the finite-time stability.
}
For $K_0\neq \mathbf{0}$, repeating the above considerations we derive \vspace{-1mm}
\[
\tfrac{d \|x\|_\dn}{dt} \leq -\rho- \tfrac{x^\top\dn^\top(-s_x)PBK_0x}{x^\top\dn^\top(-s_x)PG_\dn\dn(-s_x)x}\leq -\rho+\tfrac{\|BK_0x\|}{\beta}, \vspace{-1mm}
\]
where $\beta=\frac{1}{2}\lambda_{\min}\left(P^{\frac{1}{2}}G_{\dn}P^{-\frac{1}{2}}+P^{-\frac{1}{2}}G_{\dn}^{\top}P^{\frac{1}{2}}\right)>0$ and the identity $x^{\top} \mathbf{d}^{\top}\left(-\ln \|x\|_{\mathbf{d}}\right) P \mathbf{d}\left(-\ln \|x\|_{\mathbf{d}}\right) x=1$ is utilized in the derivations. Hence, we obtain the local finite-time stability.
\end{proof}
    The feasibility of LMIs \eqref{eq:LMI0} has been proven in \cite{polyakov2016IJRNC}. Taking $\tau = \tfrac{1}{\delta}$ we conclude the feasibility of 
     \eqref{eq:LMI_1}, at least, for small $\delta$. 
     
\begin{corollary} \label{cor:1}
  Let all conditions of Theorem \ref{thm:limited_data} hold. If there exists sufficiently small $\kappa>0$, such that for any $ x \in \mathbb{R}^n \backslash\{\mathbf{0}\}, t \geq 0$, 
  \vspace{-1mm}\begin{equation}\label{eq:pert}
  \|x\|_{\dn}\tfrac{x^{\top}\mathbf{d}^{\top}\left(-\ln \|x\|_{\mathbf{d}}\right) P\mathbf{d}\left(-\ln \|x\|_{\mathbf{d}}\right) g(t,x)}{x^{\top} \mathbf{d}^{\top}\left(-\ln \|x\|_{\mathbf{d}}\right) P G_{\mathbf{d}} \mathbf{d}\left(-\ln \|x\|_{\mathbf{d}}\right) x} \!\leq\! \kappa, \ \forall t\!\geq\! 0, \vspace{-1mm}
  \end{equation}
  almost everywhere on $\R^n$.
  Then, the perturbed closed-loop system \eqref{eq:lin_sys}, \eqref{eq:u_c_2} is globally (locally) uniformly finite-time stable if $K_0=\mathbf{0}$ (resp., $K_0\neq \mathbf{0}$). 
\end{corollary}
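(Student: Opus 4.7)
The plan is to re-run the Lyapunov computation from the proof of Theorem \ref{thm:limited_data} along the perturbed dynamics, verify that the extra term in $\tfrac{d\|x\|_{\dn}}{dt}$ is precisely the one controlled by hypothesis \eqref{eq:pert}, and deduce finite-time convergence for $\kappa$ small enough.

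First, I would differentiate $V(x)=\|x\|_{\dn}$ along $\dot x = A_0 x + BK\mathfrak{q}_{\pi_\dn}(x)+g(t,x)$ using the chain-rule formula for $\partial\|x\|_{\dn}/\partial x$ recalled in Section \ref{sec:pre}. Compared with the unperturbed derivative computed in the proof of Theorem \ref{thm:limited_data}, the only new summand is
\[
\|x\|_{\dn}\,\tfrac{x^\top\mathbf{d}^\top(-\ln\|x\|_{\dn})P\mathbf{d}(-\ln\|x\|_{\dn})\,g(t,x)}{x^\top\mathbf{d}^\top(-\ln\|x\|_{\dn})PG_{\dn}\mathbf{d}(-\ln\|x\|_{\dn})x},
\]
which is exactly the expression bounded by $\kappa$ in \eqref{eq:pert}. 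Combining this with the estimate $\dot V\le-\rho$ proved in Theorem \ref{thm:limited_data} for $K_0=\mathbf 0$ and with its counterpart $\dot V\le-\rho+\|BK_0 x\|/\beta$ for $K_0\neq\mathbf 0$, I obtain, almost everywhere on $\mathbb R^n\setminus\{\mathbf 0\}$,
\[
\dot V\le -\rho+\kappa\ \ (K_0=\mathbf 0),\qquad \dot V\le -\rho+\kappa+\tfrac{\|BK_0 x\|}{\beta}\ \ (K_0\neq\mathbf 0).
\]

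Next, I would fix $\kappa\in(0,\rho)$ and split into the two cases. For $K_0=\mathbf 0$, the inequality $\dot V\le-(\rho-\kappa)$ is uniform on $\mathbb R^n\setminus\{\mathbf 0\}$, so Theorem \ref{thm:Filippov} yields global uniform asymptotic stability of the Filippov solutions; direct integration of this Lyapunov inequality additionally provides the reaching-time bound $T(x_0)\le\|x_0\|_{\dn}/(\rho-\kappa)$, giving global uniform finite-time stability. For $K_0\neq\mathbf 0$, the correction $\|BK_0 x\|/\beta$ tends to $0$ as $x\to\mathbf 0$, hence $\dot V<0$ on a neighbourhood of the origin, and local uniform finite-time stability follows by the same integration argument on a sublevel set of $V$ contained in that neighbourhood.

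The main obstacle is not algebraic but conceptual: the perturbed closed-loop system is neither continuous nor, in general, $\dn$-homogeneous, so one must carefully justify that the Filippov-solution framework of Theorem \ref{thm:Filippov} still applies. The key observations are that $V=\|x\|_{\dn}$ is $C^1$ on $\mathbb R^n\setminus\{\mathbf 0\}$, positive definite and radially unbounded, and that hypothesis \eqref{eq:pert} is imposed pointwise almost everywhere, which matches exactly the assumptions of Theorem \ref{thm:Filippov}. A secondary subtlety is that in Theorem \ref{thm:limited_data} the finite-time conclusion relied on the negative homogeneity degree of the closed loop, whereas here $g$ may destroy homogeneity; this is bypassed by the direct integration of $\dot V\le-(\rho-\kappa)$, which already yields a finite reaching time without invoking any homogeneity argument on the perturbed system.
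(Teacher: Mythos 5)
Your proposal is correct and follows essentially the same route as the paper: reuse the estimate $\tfrac{\partial\|x\|_{\dn}}{\partial x}(Ax+BK\mathfrak{q}_{\pi_\dn}(x))\le-\rho+\tfrac{\|BK_0x\|}{\beta}$ from the proof of Theorem \ref{thm:limited_data}, add the perturbation term to $\tfrac{d\|x\|_{\dn}}{dt}$, and bound it by $\kappa$ via \eqref{eq:pert} to get $\tfrac{d\|x\|_{\dn}}{dt}\le-(\rho-\kappa)+\tfrac{\|BK_0x\|}{\beta}$. Your additional remark that the finite reaching time should come from direct integration of this differential inequality (since $g$ may break the homogeneity used in Theorem \ref{thm:limited_data}) is a sound and slightly more explicit justification of the conclusion the paper leaves implicit.
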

Indeed, 
as shown in the proofs of Theorem \ref{thm:limited_data}, there exits a positive constant $\rho$ such that $\tfrac{\partial\|x\|_\dn}{\partial x}(Ax+BK\mathfrak{q}_{\pi_\dn}(x))\le -\rho+\tfrac{\|BK_0x\|}{\beta}$.
    Then  for the perturbed system we have \vspace{-1mm}
    \begin{equation*}
        \begin{aligned}
            \tfrac{d\|x\|_\dn}{dt} &\!\le\! -\rho \!+\! \tfrac{\|BK_0x\|}{\beta}+\|x\|_\dn\tfrac{x^\top\dn^\top(-s_x)P\dn(-s_x)g(t,x)}{x^\top\dn^\top(-s_x)PG_\dn\dn(-s_x)x}\\
            & \le -(\rho-\kappa) +\tfrac{\|BK_0x\|}{\beta}.
        \end{aligned} \vspace{-1mm}
    \end{equation*}   
Notice that the inequality \eqref{eq:pert} is fulfilled, for example, if \vspace{-1mm}
\[
g(t,x)=B\gamma(t,x) :  \ \|B\gamma(t,x)\|\leq \beta\kappa, \;\forall t\geq 0,\; \forall x\in \R^n. \vspace{-1mm} 
\] Indeed, in this case, due to 
$\dn(s)B=e^sB$, we have \vspace{-1mm}
\[
\|x\|_{\dn}\tfrac{x^{\top}\mathbf{d}^{\top}\left(-\ln \|x\|_{\mathbf{d}}\right) P\mathbf{d}\left(-\ln \|x\|_{\mathbf{d}}\right) g(t,x)}{x^{\top} \mathbf{d}^{\top}\left(-\ln \|x\|_{\mathbf{d}}\right) P G_{\mathbf{d}} \mathbf{d}\left(-\ln \|x\|_{\mathbf{d}}\right) x}\leq \tfrac{\|B\gamma\|}{\beta}\leq \kappa. \vspace{-1mm}
\]
The simplest example of the homogeneous system \eqref{eq:lin_sys} is the controlled integrator chain:
$A=\left[\begin{smallmatrix} 0 & 1 & 0 & ...  & 0 & 0\\ 0 & 0 &1 & ...& 0 & 0\\
... & ... & ... & ... & ... & ...\\
0 & 0 & 0 & ... & 0 & 1\\
0 & 0 & 0 & ... & 0 & 0\\
\end{smallmatrix}\right], B=\left[\begin{smallmatrix} 0  \\ 0\\
... \\
0 \\
1\\
\end{smallmatrix}\right]$.

\subsection{A possible spherical quantizer design}\label{sec:relization}

To guarantee the feasibility of LMI \eqref{eq:LMI_1} the parameter $\delta$ has to be small enough.
This means that  quantization  on the unit sphere has to be sufficiently dense. 
However, in practice, the quantization number is often constrained by limited memory or limited data transmission (network bandwidth). For an implementation of the quantized feedback law designed in Theorem \ref{thm:limited_data}, an admissible number $M$ of bits for encoding (numbering) the quantization seeds  (i.e., $|\mathcal{Q}|\le N=2^{M}$) is given in practice.  In this case, an explicit formula relating the quantization error $\delta$  with the number of bits is required for the spherical quantizer design. In this section, given an integer $N$ and $P=X^{-1}$ satisfying  LMIs \eqref{eq:LMI_1}, we design a quantizer $\mathfrak{q}(x)$ to ensure $\|\mathfrak{q}(x) - x\| \leq \delta_N$, where $\delta_N>0$ is the quantization error for the given $N$. To provide an explicit formula for $\delta_N$, we design the homogeneous spherical quantizer based on spherical coordinates.

Any unit vector $z=[z_1,z_2,\cdots,z_n]^\top\in S^{n-1}(1)$ be represented using spherical coordinates: $\xi = [1, \varphi_1, \varphi_2, \cdots, \varphi_{n-1}]^\top$, $0\le \varphi_1, \varphi_2,\cdots, \varphi_{n-2}\le \pi$, $0\le \varphi_{n-1}<2\pi$.
Let the map from spherical coordinates to Cartesian coordinates be denoted by $g_1: \mathbb{R}_+\times[0,\pi]^{n-1}\times [0,2\pi)\mapsto S^{n-1}(1)$, \vspace{-2mm}
$$
z \!=\! g_1(\xi), z_1 \!=\!\cos\varphi_1, z_n\!=\!\!\prod_{j=1}^{n-1} \!\sin\varphi_j, z_k \!=\! \cos\varphi_k \!\prod_{j=1}^{k-1} \!\sin\varphi_j, \vspace{-2mm}
$$
where
$ k=2, 3 \cdots, n-1$.
Let the inverse map (from Cartesian coordinates to  spherical coordinates) be denoted by $g_2: S^{n-1}(1)\mapsto \mathbb{R}_+\times[0,\pi]^{n-1}\times[0,2\pi)$, \vspace{-2mm}
$$
\xi = g_2(z), \varphi_i = \operatorname{atan2}(\sqrt{\scriptstyle\sum_{j=i+1}^{n}x_j^2},x_i) , \ i = 1, \ldots, n-2, \vspace{-1mm}
$$
\vspace{-2mm}
$$
\varphi_{n-1} =\operatorname{atan2}(x_n,x_{n-1}),
$$
where $\operatorname{atan2}(\cdot,\cdot)$ is the so-called two-argument arctangent function.

\begin{proposition}\cite{wang2021TAC}\label{prop:Delta}
Let $z$, $y\in S^{n-1}(1)$ be two unit vectors with spherical coordinates
$\xi_z \!=\!g_2(z)$ and 
$
\xi_y \!=\! g_2(y),$ respectively.
If for any $i\in 1, 2, \cdots, n-1$, it holds that $|\varphi_{z,i} - \varphi_{y,i}|\le \Delta$, $\Delta\in(0,\tfrac{\pi}{2})$
 then \vspace{-2mm}
    \begin{equation}
        |z-y|\le \sqrt{2-2\left(2\cos^{2(n-1)}(\tfrac{\Delta}{2})-1\right)}. \vspace{-2mm} 
    \end{equation} 
\end{proposition}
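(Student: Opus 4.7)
My plan is to reduce the distance bound to an inner-product estimate and prove the latter by induction on $n$. Since $z$ and $y$ are unit vectors, $|z-y|^2 = 2-2\langle z,y\rangle$, so the desired bound is equivalent to
\[
\langle z,y\rangle \ \ge\ \alpha_n := 2\cos^{2(n-1)}(\Delta/2) - 1,
\]
to be proved under the hypothesis $|\varphi_{z,i}-\varphi_{y,i}|\le\Delta$ for $i=1,\ldots,n-1$. The base case $n=2$ is immediate: from $z=(\cos\varphi_{z,1},\sin\varphi_{z,1})$ and the analogous formula for $y$ one gets $\langle z,y\rangle = \cos(\varphi_{z,1}-\varphi_{y,1})\ge \cos\Delta = 2\cos^2(\Delta/2)-1 = \alpha_2$.

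For the inductive step from $n-1$ to $n$, I would exploit the recursive structure of the spherical coordinate map $g_1$ by writing $z = (\cos\varphi_{z,1},\ \sin\varphi_{z,1}\,\tilde z)$, where $\tilde z \in S^{n-2}(1)$ carries the remaining angles $(\varphi_{z,2},\ldots,\varphi_{z,n-1})$, and similarly for $y$. Direct expansion then gives
\[
\langle z, y\rangle = \cos\varphi_{z,1}\cos\varphi_{y,1} + \sin\varphi_{z,1}\sin\varphi_{y,1}\,\langle\tilde z, \tilde y\rangle.
\]
Because $\varphi_{z,1},\varphi_{y,1}\in[0,\pi]$ both sines are non-negative, and the inductive hypothesis applied in dimension $n-1$ yields $\langle\tilde z,\tilde y\rangle\ge \alpha_{n-1}$. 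Hence
\[
\langle z, y\rangle \ \ge\ \cos\varphi_{z,1}\cos\varphi_{y,1} + \alpha_{n-1}\sin\varphi_{z,1}\sin\varphi_{y,1}.
\]

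The remainder is a two-variable trigonometric minimization. Setting $a = (\varphi_{z,1}+\varphi_{y,1})/2$ and $b = (\varphi_{z,1}-\varphi_{y,1})/2$, product-to-sum identities rewrite the right-hand side as
\[
\tfrac{1+\alpha_{n-1}}{2}\cos(2b) \ +\ \tfrac{1-\alpha_{n-1}}{2}\cos(2a).
\]
With $|\alpha_{n-1}|\le 1$ both coefficients are non-negative, so the expression can be minimized independently using $\cos(2b)\ge\cos\Delta$ (from $|2b|\le\Delta$) and $\cos(2a)\ge-1$. Writing $c=\cos(\Delta/2)$ and substituting $\cos\Delta=2c^2-1$ together with $\alpha_{n-1}=2c^{2(n-2)}-1$ telescopes the minimum to $2c^{2(n-1)}-1 = \alpha_n$, closing the induction. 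The main obstacle I anticipate is precisely this last step: one must verify that the bound $|\alpha_{n-1}|\le 1$ persists through the recursion so the convex-combination argument remains valid, and that the worst case $2a=\pi$ with $|2b|=\Delta$ is actually attainable inside $\varphi_{z,1},\varphi_{y,1}\in[0,\pi]$ when $\Delta\in(0,\pi/2)$ (it is, via $\varphi_{z,1}=\pi/2+\Delta/2$, $\varphi_{y,1}=\pi/2-\Delta/2$). The degenerate subcase $\sin\varphi_{z,1}\sin\varphi_{y,1}=0$ collapses the inequality to $\cos\varphi_{z,1}\cos\varphi_{y,1}\ge\cos\Delta\ge\alpha_n$ and poses no additional difficulty.
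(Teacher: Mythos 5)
Your proof is correct. Note first that the paper does not prove this proposition at all --- it is imported verbatim from the cited reference \cite{wang2021TAC} --- so there is no in-paper argument to compare against; what you have produced is a self-contained justification of the cited fact. Your reduction $|z-y|^2=2-2\langle z,y\rangle$ turns the claim into $\langle z,y\rangle\ge \alpha_n:=2\cos^{2(n-1)}(\Delta/2)-1$, and the induction via the recursive structure $z=(\cos\varphi_{z,1},\,\sin\varphi_{z,1}\tilde z)$ of the map $g_1$ is sound: for $n\ge 3$ the first angle lies in $[0,\pi]$, so both sines are nonnegative and the inductive bound $\langle\tilde z,\tilde y\rangle\ge\alpha_{n-1}$ may be substituted without reversing the inequality; the worry you flag about $|\alpha_{n-1}|\le 1$ is immediate, since $\alpha_{n-1}=2\cos^{2(n-2)}(\Delta/2)-1\in(-1,1]$ for $\Delta\in(0,\pi/2)$, so the two coefficients $\tfrac{1\pm\alpha_{n-1}}{2}$ are indeed nonnegative and the termwise minimization with $\cos(2b)\ge\cos\Delta$, $\cos(2a)\ge-1$ is legitimate; the algebra then telescopes to $\alpha_n$ exactly as you compute. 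Two small remarks: attainability of the worst case is irrelevant for an upper bound on $|z-y|$, so that part of your discussion can be dropped; and to be fully clean the induction is best phrased over admissible angle tuples (with $z=g_1(\xi_z)$, $y=g_1(\xi_y)$) rather than over vectors, which sidesteps the degenerate situations where $g_2$ does not recover the truncated angles (e.g.\ when some $\sin\varphi_{j}=0$); since $g_1(g_2(z))=z$, this formulation still yields the stated proposition. Your bound also matches the form $2\bigl[1-\cos^{2(n-1)}(\Delta/2)\bigr]^{1/2}$ used for $\delta_N$ later in the paper, confirming consistency.
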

For a strictly monotone dilation $\dn$ with respect to $P\succ 0$,  the $\dn$-homogeneous spherical quantizer  in $\mathbb{R}^n$ is desfined as follows:\vspace{-1mm}
\begin{equation}\label{eq:partition}
\!\!\!\mathfrak{q}_{\pi_\dn}(x)\! =\! P^{-1/2}g_1([1, q(\varphi_{\pi_\dn,1}), \cdots, q(\varphi_{\pi_\dn,n-1})]^\top), \vspace{-1mm} 
\end{equation}
where
$
[1, \varphi_{\pi_\dn,1}, \varphi_{\pi_\dn,2}, \cdots, \varphi_{\pi_\dn,n-1}]^\top = g_2(P^{\tfrac{1}{2}}\pi_\dn(x)) 
$,
\[
q(\varphi_{\pi_\dn,k})\!=\!\left(\lfloor\tfrac{\varphi_{\pi_\dn,k}}{\Delta} \rfloor\!+\! \tfrac{1}{2}\right) \Delta, \quad \Delta\!\in(0,\tfrac{\pi}{2}), 
\]
$k \!=\! 1,2,\cdots, n-1$.

The proposed quantizer uses the uniform quantization of the angles of the spherical coordinates with equal intervals.
\begin{corollary}
\label{lem:q_err}
Let the $\dn$-homogeneous spherical quantizer  in $\mathbb{R}^n$ be defined by the formula \eqref{eq:partition} with $\Delta\in(0,\tfrac{\pi}{2})$ given by $\Delta\!=\tfrac{\pi}{\left\lfloor(N/2)^{1/(n-1)}\right\rfloor}$. Then the set of quantization seeds admits the estimate $|\mathcal{Q}_s|\le N$
    and the quantization error of \eqref{eq:partition} is bounded by $\|\mathfrak{q}_{\pi_\dn}(\pi_\dn(x)) \!-\! \pi_\dn(x)\| \!\le \delta_N$, \vspace{-2mm}
    \[
    \delta_N :=2\left[1 - \cos^{2(n-1)}\left(\tfrac{\pi}{2\left\lfloor(N/2)^{1/(n-1)}\right\rfloor}\right)\right]^{\frac{1}{2}}. \vspace{-2mm}
    \]
\end{corollary}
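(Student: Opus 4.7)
The plan is to verify the two assertions separately: the cardinality bound $|\mathcal{Q}_s|\le N$ and the pointwise quantization error bound. Both follow from the uniform structure of the angular quantizer $q$ together with Proposition \ref{prop:Delta}. With $M := \lfloor (N/2)^{1/(n-1)}\rfloor$ so that $\Delta = \pi/M$, I would first count seeds. Because $\varphi_{\pi_\dn,k}\in[0,\pi]$ for $k=1,\dots,n-2$, the map $\varphi\mapsto \lfloor\varphi/\Delta\rfloor$ produces at most $M$ distinct integer values (absorbing the boundary $\varphi=\pi$ into the last bin), and because $\varphi_{\pi_\dn,n-1}\in[0,2\pi)$ it produces at most $2M$ values for the last coordinate. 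Multiplying across coordinates gives $|\mathcal{Q}_s|\le M^{n-2}\cdot 2M = 2M^{n-1}\le 2\cdot(N/2)=N$, which establishes the first claim.

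For the error bound, I would use the whitening factor $P^{1/2}$ already embedded in \eqref{eq:partition} to transport the problem onto the standard Euclidean unit sphere. Set $z := P^{1/2}\pi_\dn(x)$; then $|z|=1$ because $\|\pi_\dn(x)\|=1$ in the weighted norm, and by the definition of the quantizer $\mathfrak{q}_{\pi_\dn}(\pi_\dn(x)) = P^{-1/2}y$, where $y\in S^{n-1}(1)$ is the unit vector whose spherical coordinates are obtained from those of $z$ by replacing each $\varphi_i$ with $q(\varphi_i)$. A direct computation then shows
\[
\|\mathfrak{q}_{\pi_\dn}(\pi_\dn(x))-\pi_\dn(x)\|^{2} = (y-z)^{\top}P^{-1/2}\,P\,P^{-1/2}(y-z) = |y-z|^{2},
\]
reducing the weighted-norm quantization error to a plain Euclidean chord length between two points on the standard unit sphere.

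Finally, the definition $q(\varphi)=(\lfloor\varphi/\Delta\rfloor+\tfrac{1}{2})\Delta$ immediately yields $|q(\varphi_i)-\varphi_i|\le\Delta/2\le\Delta$ in every coordinate, with $\Delta=\pi/M\in(0,\pi/2)$ whenever $M\ge 3$ (the marginal cases $M\le 2$ occur only for trivially small $N$ and can be absorbed into the looseness of the bound). Invoking Proposition \ref{prop:Delta} on $z$ and $y$ with parameter $\Delta$ delivers
\[
|y-z|\le \sqrt{2-2\bigl(2\cos^{2(n-1)}(\Delta/2)-1\bigr)} = 2\sqrt{1-\cos^{2(n-1)}(\pi/(2M))} = \delta_N,
\]
which closes the argument. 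The cardinality check is purely combinatorial and transparent; the only step that really requires thought is the whitening observation converting the weighted-norm error into a Euclidean chord, after which Proposition \ref{prop:Delta} does all the remaining work.
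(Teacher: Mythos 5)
Your proposal is correct and follows essentially the same route as the paper: count bins ($M^{n-2}\cdot 2M = 2M^{n-1}\le N$), bound the per-angle error by the uniform step so Proposition \ref{prop:Delta} applies, and use the $P^{1/2}$ factor in \eqref{eq:partition} to identify the weighted-norm error with the Euclidean chord on the unit sphere. The only difference is that you spell out explicitly the whitening isometry and the $|q(\varphi)-\varphi|\le\Delta/2\le\Delta$ step, which the paper leaves implicit.
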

\begin{proof}
Indeed, 
according to Proposition \ref{prop:Delta}, we have \vspace{-2mm}
\begin{equation*}
    \begin{aligned}
        &
        \left|g_1([1, q(\varphi_{\pi_\dn,1}), \cdots, q(\varphi_{\pi_\dn,n-1})]^\top)-P^{\tfrac{1}{2}}\pi_\dn(x)\right|\\
        &\le
        2\left[1 - \cos^{2(n-1)}\left(\!\tfrac{\Delta}{2}\right)\right]^{\frac{1}{2}}.
    \end{aligned}
    \vspace{-2mm}
\end{equation*}
Hence,  $\|\mathfrak{q}_{\pi_\dn}(x) - \pi_\dn(x)\|\le 2\left[1 - \cos^{2(n-1)}\left(\!\tfrac{\Delta}{2}\right)\right]^{\frac{1}{2}}$. 
For each angle $\varphi_{\pi_\dn,i}$, $i = 1,2,\ldots, n-2$ the interval $[0,\pi/2]$ is divided into $\lfloor (\tfrac{N}{2})^{\tfrac{1}{n-1}}\rfloor$ non-overlapping intervals. Additionally, $\varphi_{\pi_\dn,n-1}$ is divided into $2\lfloor (\tfrac{N}{2})^{\tfrac{1}{n-1}}\rfloor$ non-overlapping intervals.
The number of quantization seeds is given by $|\mathcal{Q}_s| = 2\lfloor (N/2)^{\tfrac{1}{n-1}}\rfloor^{n-1}\leq N$. 
\end{proof}

    Geometrically, the uniform quantizer and logarithmic quantizer partition the space into disjoint rectilinear quantization cells. In contrast, the proposed homogeneous spherical quantizer divides the space into disjoint homogeneous cones\footnote{A nonempty set $\mathcal{H} \subseteq \mathbb{R}^n$ is said to be $\dn$-homogeneous cone in $\mathbb{R}^n$ (\cite{polyakov2020book})  if $\mathcal{H}$ is invariant with respect to the dilation $\dn$, i.e.,
$
\dn(s) z \in \mathcal{H} \text { for } z \in \mathcal{H} \text { and } s \in \mathbb{R}.
$} as quantization cells, where all cells are unbounded, and the quantization seeds lie on the surface of a sphere. An illustrative example for a two-dimensional space  is presented in Figure \ref{fig:L_U_S}.

\begin{figure}[htbp]
    \centering
    \includegraphics[width = 0.35\textwidth]{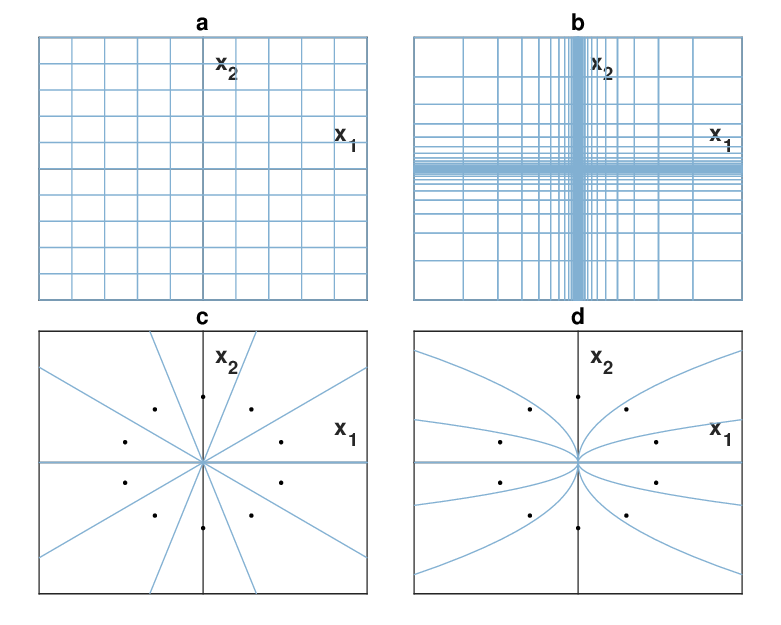}
    \caption{ Illustration of quantization in $\mathbb{R}^2$. (a) Uniform quantizer per axis. (b) Logarithmic quantizer = per axis. (c) Homogeneous spherical quantizer. 
    ($G_\dn=I_2$). (d) Homogeneous spherical quantizer ($G_\dn=\left[\begin{smallmatrix}2 & 0\\ 0 & 1\end{smallmatrix}\right]$). 
    The dots in (c) and (d) represent quantization  seeds on the sphere.
    }
    \label{fig:L_U_S}
\end{figure}

\section{Numerical example}\label{sec:sim}
For numerical validation, we consider the linear system with 
$
A = \left[\begin{smallmatrix}
    0  &  2  &  3\\
     0  &   0  &  4\\
     0   &  0  &  0
\end{smallmatrix}\right]$, $ B = \left[\begin{smallmatrix}
      0 \\
     0 \\
     1.5
\end{smallmatrix}\right]$.
The perturbation is considered as a function $g(x,t) = B(0.2\sin(t))$. The initial state is 
 $x_0 = [2,1,1]^\top$. 
 Since $A$ is nilpotent, then by solving equation \eqref{eq:G0}, we obtain $K_0=\boldsymbol{0}$, $G_\dn=\left[
 \begin{smallmatrix}
     3 & -0.75 & 0\\
     0 & 2 & \\
     0 & 0 & 1
 \end{smallmatrix}
 \right]$. 
 By solving the \eqref{eq:LMI_1} with $\delta=0.4$, $\tau = \tfrac{1}{\delta}$, we obtain \vspace{-2mm}
\[ P = \left[
\begin{smallmatrix}
    0.0053  &  0.0037  &  0.0185\\
    0.0037  &  0.0212  &  0.0381\\
    0.0185  &  0.0381  &  0.2522
\end{smallmatrix}
\right], \ 
K = -\left[
\begin{smallmatrix}
   0.1327 &  0.4089  & 1.7270
\end{smallmatrix}
\right]. 
\vspace{-2mm}
\]
 Following to Lemma \ref{lem:q_err}, we calculate the numbers of bits $N=8$ required to encode the quantized states:\vspace{-2mm}
\[
\|\mathfrak{q}_{\pi_\dn}(\pi_\dn(x))\!-\!\pi_\dn(x)\|\le \delta_N = 0.3896<\delta = 0.4. \vspace{-2mm}
\]
The homogeneous spherical quantizer is designed by the formula
\eqref{eq:partition}.
The simulation results are presented in Figures \ref{fig:sphere}, \ref{fig:x}, and \ref{fig:q}. Figure \ref{fig:sphere} illustrates the quantization algorithm with quantization seeds positioned on the weighted sphere. The evolution of the state of the closed-loop system is shown in Figure \ref{fig:x}. It validates the finite-time convergence of the state to the origin (up to discretization error) under perturbation. Figure \ref{fig:q} presents the quantization values during stabilization. The chattering phenomenon in Figure \ref{fig:q} results from the discontinuity in the projection $\pi_\delta(x)$ at $x = \mathbf{0}$.   

\begin{figure}[ht]
    \centering
\includegraphics[width = 0.35\textwidth]{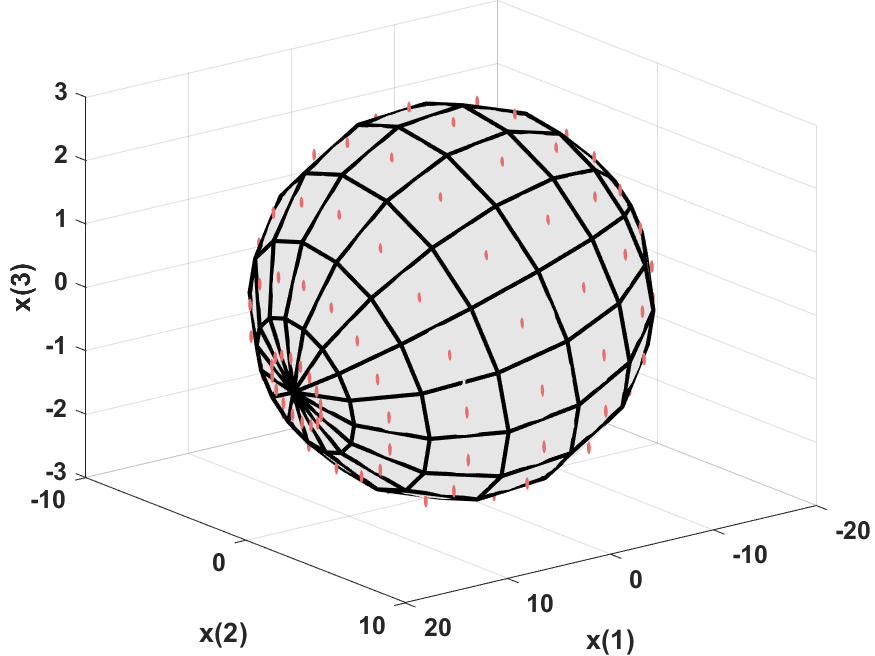}
    \caption{Quantization seeds (red markers)  with $N = 512$ are placed on the unit sphere  $x^\top P x=1$. The lines represent the intersections of quantization cells with the weighted sphere.}
    \label{fig:sphere}
\end{figure}
\begin{figure}[ht]
    \centering \includegraphics[width =0.3\textwidth]{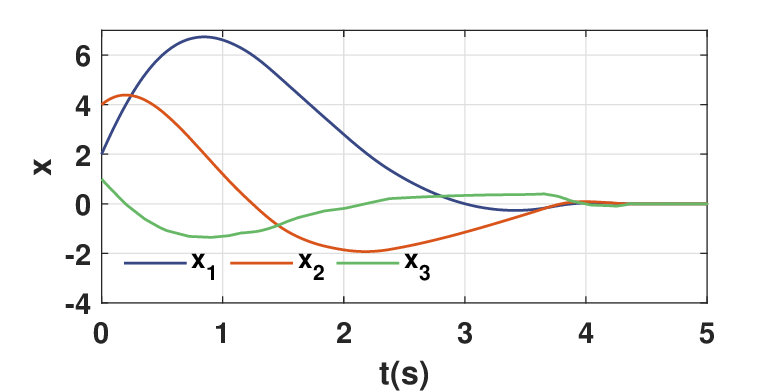}
    \caption{The system state under finite static quantization feedback.}
    \label{fig:x}
\end{figure}

\begin{figure}[ht]
    \centering \includegraphics[width =0.32\textwidth]{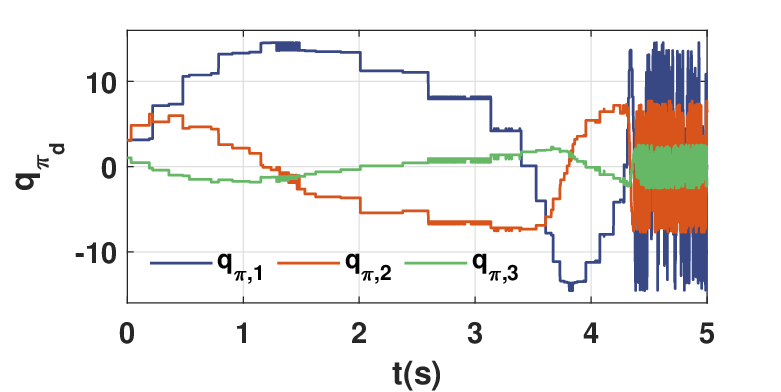}
    \caption{The quantization value $\mathfrak{q}_{\pi_\dn}(x)$.}
    \label{fig:q}
\end{figure}

\section{Conclusion}
This paper proposer a finite-time stabilizer for an LTI system with a finite static quantization of the system state. The design is based on the generalized homogeneity. The quantizer is  designed as a composition of a homogeneous projector to the unit sphere  with a spherical quantizer. In this case, the feedback law only uses discrete values on the unit sphere as quantization seeds of the state from $\R^n$. The design under limited information is addressed by solving parametric linear matrix inequalities. The chattering of the control input appears as the system state reaches the origin  due to discontinuity. Future work may focus on the optimal partition of the unit sphere and chattering attenuation.


\bibliographystyle{unsrtnat}        
\bibliography{reference.bib}           

\appendix

\end{document}